\newtheorem{thm}{Theorem}[section]
\newtheorem{cor}[thm]{Corollary}
\newtheorem{lem}[thm]{Lemma}
\newtheorem{pro}[thm]{Proposition}
\theoremstyle{definition}
\newtheorem{rem}[thm]{Remark}
\numberwithin{equation}{section}
\newcommand{\pr}{\mathbb{P}}
\newcommand{\ex}{\mathbb{E}}
\newcommand{\re}{\textup{Re}}
\newcommand{\im}{\textup{Im}}
\newcommand{\F}{\mathcal{F}(x)}
\newcommand{\Fstar}{\mathcal{F}^*(x)}
\newcommand{\E}{\mathcal{E}(x)}
\newcommand{\g}{\gamma_{\mathbb{Q}(\sqrt D)}}
\newcommand{\gr}{\gamma_{\textup{rand}}(X)}
\newcommand{\lap}{\mathcal{L}_x}
\newcommand{\lapr}{\mathcal{L}_{\textup{rand}}}
\begin{document}

\baselineskip=16pt

\title{The distribution of Euler-Kronecker constants of quadratic fields}

\author[Youness Lamzouri]{Youness Lamzouri}

\address{Department of Mathematics and Statistics,
York University,
4700 Keele Street,
Toronto, ON,
M3J1P3
Canada}

\email{lamzouri@mathstat.yorku.ca}

\date{}

\begin{abstract}  We investigate the distribution of large positive (and negative) values of the Euler-Kronecker constant $\g$ of the quadratic field $\mathbb{Q}(\sqrt{D})$ as $D$ varies over fundamental discriminants $|D|\leq x$. We show that the distribution function of these values  is very well approximated by that of an adequate probabilistic random model in a large uniform range. The main tools are an asymptotic formula for the Laplace transform of $\g$ together with a careful saddle point analysis. 
\end{abstract}

\subjclass[2010]{Primary 11M06; Secondary 11R11.}

\thanks{The author is partially supported by a Discovery Grant from the Natural Sciences and Engineering Research Council of Canada.}

\maketitle

\section{Introduction}

Let $K$ be an algebraic number field, $\mathcal{O}_K$ be its ring of integers and $N(\mathfrak{a})$ denote the norm of an ideal $\mathfrak{a}$ in $\mathcal{O}_K$. The Dedekind zeta function of $K$ is defined for $\re(s)>1$ by
$$\zeta_K(s)=\sum_{\mathfrak{a}}\frac{1}{N(\mathfrak{a})^s}= \prod_{\mathfrak{p}}\left(1-\frac{1}{N(\mathfrak{p})^s}\right)^{-1}, $$
where $\mathfrak{a}$ ranges over non-zero ideals and $\mathfrak{p}$ ranges over the prime ideals in $\mathcal{O}_K$. It is known that $\zeta_K(s)$ has an analytic continuation to $\mathbb{C}\setminus\{1\}$ and a simple pole at $s=1$ with residue $\alpha_K$. The well-known class number formula relates $\alpha_K$ to several algebraic invariants of $K$, including the discriminant, class number and regulator of $K$. 

The \emph{Euler-Kronecker} constant (or invariant) of $K$ is defined by
$$\gamma_K=\lim_{s\to 1} \left(\frac{\zeta'_K(s)}{\zeta_K(s)}+\frac{1}{s-1}\right).$$
Moreover, if the Laurent series expansion of $\zeta_K(s)$ is 
$$ \zeta_K(s)= \frac{\alpha_K}{s-1} + c_0(K)+ c_1(K) (s-1)+ c_2(K)(s-1)^2\cdots,$$
then 
$\gamma_K=c_0(K)/\alpha_K.$
Note that when $K=\mathbb{Q}$, we have $\gamma_{K}=\gamma$, where $\gamma=0.577...$ is the Euler-Mascheroni constant.

The Euler-Kronecker constant was first introduced and studied by Ihara in \cite{Ih1} and \cite{Ih2}. In particular, Ihara proved in \cite{Ih1} that if $d_K$ is the discriminant of $K$ then 
$$ -\frac12\log {|d_k|}\leq \gamma_K\leq 2\log\log |d_K|, $$ 
where the upper bound is conditional on the Generalized Riemann hypothesis GRH. Tsafsman \cite{Ts} showed that the lower bound is optimal up to a constant, and hence that the maximal order of $|\gamma_K|$ is $\asymp \log |d_K|.$ However, Ihara \cite{Ih1} proved that this order is much smaller if the degree of $K$ is small. 

When $K$ is the cyclotomic field $K(q):=\mathbb{Q}\big(e^{2\pi i/q}\big)$, Ihara \cite{Ih1} showed that $\gamma_{K(q)}=O(\log^2 q)$ assuming GRH, and this bound was improved to $O(\log q\log\log q)$ by Badzyan \cite{Ba}.  Murty \cite{Mu} proved an upper bound for the first moment of $\gamma_{K(q)}$, which was refined to an asymptotic formula by  Fouvry \cite{Fo}, who showed that the average order of $\gamma_{K(q)}$ is $\log Q$. In the case where $q$ is prime, Ford, Luca and Moree \cite{FLM} studied $\gamma_{K(q)}$ and showed that it appears in the asymptotic expansion of the number of integers $n\leq x$ for which $\varphi(n)$ is not divisible by $q$, where $\varphi$ is the Euler $\varphi$-function. 

In the special case where $K=\mathbb{Q}(\sqrt{D})$ is a quadratic field, we know that the corresponding Dedekind zeta function factorizes as $\zeta_K(s)=\zeta(s) L(s,\chi_D)$, where $\chi_D(n)=(D/n)$ is the Kronecker symbol. Therefore
$$\g= \gamma+ \frac{L'(1,\chi_D)}{L(1,\chi_D)}.$$
When $\mathbb{Q}(\sqrt{D})$ is imaginary, the Kronecker limit formula expresses $\g$ in terms of special values
of the Dedekind $\eta$-function (see Section 2.2 of \cite{Ih1}). 

In \cite{Ih1}, Ihara proved that under GRH we have 
$$
 |\g|\leq (2+o(1))\log\log |D|.
$$ Using a zero density result of Heath-Brown \cite{HB}, we show in Corollary \ref{ASBound} below that this bound is attained for almost all fundamental discriminants. More precisely, we prove that  for all but at most $O(x^{\epsilon})$ fundamental discriminants $D$ with $|D|\leq x$ we have 
$$\g \ll_{\epsilon}\log\log |D|.$$
On the other hand, Mourtada and Murty \cite{MoMu} proved that there are infinitely many $D$ for which 
$$
\pm  \g \geq \log\log |D| + O(1).$$ They also showed that this bound can be improved  to $\log\log |D|+\log\log\log |D|+O(1)$ under GRH.

In analogy to $L(1,\chi_D)$, we expect that for all fundamental discriminants $D$ with $|D|\leq x$ we have 
\begin{equation}\label{TrueRange}
|\g| \leq \log\log x +\log\log\log x+O(1),
\end{equation} 
so that the true order of extreme values of $\g$ is closer to the omega results of Mourtada-Murty rather than the conditional $O$-result of Ihara. Our Theorem \ref{AsympDistrib} below gives strong support for this conjecture (see Remark \ref{Support} below).

To investigate the distribution of the Euler-Kronecker constant $\g$, our strategy consists in constructing an adequate probabilistic random model for these values.  
Let $\{X(p)\}_{p \text{ prime}}$ be a sequence of independent random variables, indexed by the primes, and taking the values $1, -1$ and $0$ with the following probabilities
$$ \pr(X(p)=a)= \begin{cases} \frac{p}{2(p+1)} & \text{ if } a=\pm 1,\\
\frac{1}{p+1}  & \text{ if } a=0.\\
\end{cases}
$$
We extend the $X(p)$ multiplicatively to all positive integers by setting $X(1)=1$ and 
$ X(n):= X(p_1)^{a_1}\cdots X(p_k)^{a_k}, $ if $n= p_1^{a_1}\cdots p_k^{a_k}.$
These random variables were first introduced by Granville and Soundararajan \cite{GrSo} to study the distribution of $L(1,\chi_D)$. The reason for this choice over the simpler $\pm 1$ with probability $1/2$ is that for odd primes $p$, fundamental discriminants $D$ lie in one of $p^2-1$ residue classes mod $p^2$ so that $\chi_D(p)=0$ for $p-1$ of these classes, and the remaining $p(p-1)$ residue classes split equally into $\pm 1$ values (for $p=2$ one can check that the values $0,\pm 1$ occur equally often). 
We shall compare the distribution of $\g$, as $D$ varies among fundamental discriminants $|D|\leq x$, to that of the following probabilistic random model:
$$\gr:= \gamma-\sum_{n=1}^{\infty} \frac{\Lambda(n) X(n)}{n}=\gamma-\sum_{p}\frac{(\log p)X(p)}{p-X(p)}.$$
Since $\ex(X(n))=0$ unless $n$ is a square (see \eqref{ortho} below), and $\sum_{n\geq 2}(\log n)^2/n^2<\infty$, then it follows from Kolmogorov three series theorem that $\gr$ is almost surely convergent.

Here and throughout, we denote by $\F$ the set of all fundamental discriminants $D$ with $|D|\leq x$. Note that $|\F|= 6 x/\pi^2 +O(\sqrt{x}).$   Our main result shows that the distribution of $\g$ is very well approximated by that of the random variable $\gr$ uniformly in nearly the whole conjectured range \eqref{TrueRange}. 

\begin{thm}\label{MainTheorem}
Let $x$ be large. There exists a positive constant $C$ such that uniformly in the range $1\leq \tau\leq \log\log x-2\log\log\log x-C$, we have 
$$\frac{1}{|\F|}\big|\{D\in \F: \g >\tau \}\big|= \pr\big(\gr>\tau\big)\left(1+O\left(\frac{e^{\tau}(\log\log x)^3}{\tau\log x}\right)\right),$$
and 
$$ \frac{1}{|\F|}\big|\{D\in \F: \g <-\tau \}\big|= \pr\big(\gr<-\tau\big)\left(1+O\left(\frac{e^{\tau}(\log\log x)^3}{\tau\log x}\right)\right).$$
\end{thm}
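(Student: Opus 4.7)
The plan is to follow the Laplace transform and saddle point strategy that has proven fruitful for comparing a number theoretic distribution with a probabilistic model. The arithmetic side is driven by the identity
$$\g = \gamma - \sum_{n\geq 1}\frac{\Lambda(n)\chi_D(n)}{n},$$
which parallels the definition of $\gr$ with $X(p)$ in place of $\chi_D(p)$. The probabilities defining $X(p)$ are rigged so that averaging $\chi_D(n)$ over $\F$ for small $n$ matches $\ex(X(n))$, a quantity that vanishes unless $n$ is a square.

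The first major step is to establish an asymptotic formula of the form
$$\lap(s) := \frac{1}{|\F|}\sum_{D \in \F} e^{s\g} = \lapr(s)\bigl(1 + E(s,x)\bigr),$$
where $\lapr(s) := \ex(e^{s\gr})$, valid for complex $s$ in a region large enough to contain a saddle point. I would truncate the prime sum for $\g$ at a threshold $y$, chosen as a small power of $\log x$, absorbing the truncation error into $E$ via mean-square type bounds for $L'/L$ (in the spirit of the zero density result already cited in the excerpt). On the short range $p \leq y$, the factorization of $e^{s\g}$ as an Euler product reduces the problem to computing averages of $\chi_D(n)$ over $\F$, and since fundamental discriminants cover the $p^2-1$ admissible residue classes modulo $p^2$ uniformly, these averages match $\ex(X(n))$ up to an error of order $y^A/\sqrt{x}$.

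Once $\lap(s)$ is controlled, I would recover the tail probabilities by Laplace inversion, writing
$$\frac{|\{D \in \F : \g > \tau\}|}{|\F|} = \frac{1}{2\pi i}\int_{(\kappa_0)} \lap(s)\, e^{-\tau s}\,\frac{ds}{s}$$
together with the analogous identity for the probabilistic model. The saddle point $\kappa_0 = \kappa_0(\tau)$ is defined by $(\log\lapr)'(\kappa_0) = \tau$; a direct calculation with the Euler product for $\lapr$ shows that $\kappa_0 \asymp e^\tau$ near the edge of the allowed range. I would then split the contour into a central window around $\kappa_0$, where a local quadratic expansion of $\log\lapr$ evaluates the probabilistic integral, and outer wings which are discarded using decay estimates for $|\lapr(\kappa_0+it)|$ coming from the convergence of the defining prime sum. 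Substituting $\lap = \lapr(1+E)$ on the same contour then yields the comparison stated in the theorem. The case $\g < -\tau$ is entirely symmetric, since $-X(p)$ and $X(p)$ are equidistributed.

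The main obstacle is ensuring that the comparison $\lap(s) = \lapr(s)(1+o(1))$ holds with sufficient uniformity on the entire saddle point contour, not just at $\kappa_0$. Because $\kappa_0 \asymp e^\tau$ grows, the permissible truncation error shrinks as $\tau$ increases; balancing this against the contribution of discarded primes produces the multiplicative remainder $e^\tau(\log\log x)^3/(\tau\log x)$ and forces the uniformity range $\tau \leq \log\log x - 2\log\log\log x - C$ stated in the theorem.
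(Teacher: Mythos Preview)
Your overall strategy matches the paper's: compare the Laplace transform $\lap(s)$ to $\lapr(s)$ and then recover the tail probabilities by contour integration with a saddle-point analysis. Two of the technical steps, however, differ from the paper in ways that matter.

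For the Laplace comparison, the paper does not average an Euler product directly. Instead it first removes a small exceptional set $\E$ on which $|\g|$ may exceed $B_\epsilon\log\log x$, Taylor-expands $e^{s\g}$ to order $N\asymp \log x/\log\log x$, and evaluates the resulting \emph{integer} moments of $\g$ via a Dirichlet-polynomial approximation to $(-L'/L(1,\chi_D))^k$ of length $x$ (not a power of $\log x$); see Theorems~\ref{MomentsGamma} and~\ref{MomentsEuler}. The paper remarks explicitly that computing the Laplace transform of $\g$ directly is regarded as difficult, and the integer-moment detour is the point. This yields an \emph{additive} error, $\lap(s)=\lapr(s)+O(\exp(-\log x/(50\log\log x)))$, rather than your multiplicative one. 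Your Euler-product description does not explain how you control, after expansion, the resulting sum over $n$; and ``mean-square type bounds'' on the truncation error are not enough, since you are averaging $e^{s\cdot(\text{error})}$, which can be enormous on the exceptional discriminants---this is precisely why the paper excises $\E$ first.

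For the inversion, the bare Perron integral $\frac{1}{2\pi i}\int_{(\kappa)}\lap(s)e^{-\tau s}\,\frac{ds}{s}$ that you write down does not converge absolutely: one only has the trivial bound $|\lap(\kappa+it)|\le\lap(\kappa)$, and $1/|s|$ is not integrable. The decay you cite for $\lapr$ (Lemma~\ref{DecayLaplace}) does not transfer to $\lap$ outside the window $|t|\le Y\asymp \log x/(\log\log x)^2$ where the comparison \eqref{Theorem1.5} is valid. The paper resolves this with a smoothed kernel $\bigl((e^{\lambda s}-1)/(\lambda s)\bigr)^N$, taking $N=\lfloor\log\log x\rfloor$ and $\lambda\asymp(\log\log x)^2/\log x$ (Lemma~\ref{SmoothPerron}); the extra factor $|s|^{-N}$ disposes of the tails of both $I(\tau)$ and $J_x(\tau)$ using only the trivial bound, after which the additive error is fed in on the truncated contour. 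The price is a shift $\tau\mapsto\tau\pm\lambda N$, and it is this shift---evaluated through Theorem~\ref{ExponentialDecay}---that produces the multiplicative error $e^{\tau}(\log\log x)^3/(\tau\log x)$ in the statement. One minor correction: the saddle point satisfies $\kappa\asymp e^{\tau}/\tau$, not $e^{\tau}$ (see \eqref{OrderSaddle}).
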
 
Since $L'/L(1,\chi_D)=\g -\gamma$, Theorem 1.1 can be rephrased in terms of the logarithmic derivative of quadratic Dirichlet $L$-functions at $s=1$. The values of logarithmic derivatives of $L$-functions have been studied by Ihara and Matsumoto \cite{IhMa}, and Ihara, Murty and Shimura \cite{IMS} in the case of Dirichlet $L$-functions, and by Cho and Kim \cite{ChKi} in the case of Artin $L$-functions. In particular, Ihara and Matsumoto \cite{IhMa} showed that as $\chi$ varies over non principal characters modulo a prime $q$,  $L'/L(1,\chi)$ has a limiting distribution as $q\to\infty$. However, Theorem \ref{MainTheorem} is the first result that gives precise information on the distribution of logarithmic derivatives of $L$-functions at $s=1$ with such a great uniformity. We should also note that with a slight modification of our method we can obtain similar results for the distribution of $|\zeta'/\zeta(1+it)|$, and that of $|L'/L(1, \chi)|$ as $\chi$ varies over non-principal characters modulo a large prime $q$. To construct the probabilistic random model in these cases we take the $\{X(p)\}_p$ to be uniformly distributed on the unit circle.

Our next task is to study the asymptotic behavior of the distribution functions $\pr\big(\gr>\tau\big)$ and $\pr\big(\gr<-\tau\big)$  in terms of $\tau$, when $\tau$ is large. We achieve this by a careful saddle point analysis. In particular, we show that these distribution functions are double exponentially decreasing in $\tau$. 

\begin{thm}\label{ExponentialDecay}
For large $\tau$ we have  
$$ \pr(\gr>\tau)=\exp\left(-\frac{e^{\tau-A_1}}{\tau}\left(1+ O\left(\frac{\log\tau}{\tau}\right)\right)\right),$$
and 
$$ \pr(\gr<-\tau)=\exp\left(-\frac{e^{\tau-A_2}}{\tau}\left(1+O\left(\frac{\log\tau}{\tau}\right)\right)\right),$$
where 
$$A_1:=A_0+2\frac{\zeta'(2)}{\zeta(2)}, \text{ and }
A_2:=A_0- 2\gamma,
$$
and $$ A_0:= \int_0^1\frac{\tanh(t)}{t}dt + \int_1^{\infty}\frac{\tanh(t)-1}{t}dt.$$
\end{thm}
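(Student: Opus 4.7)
The approach is a saddle-point evaluation of $\pr(\gr>\tau)$ via the Laplace transform $\lapr(s):=\ex(e^{s\gr})$, which by independence factors as $\lapr(s)=e^{s\gamma}\prod_p f_p(s)$ with
$$f_p(s)=\frac{p}{2(p+1)}\bigl(p^{s/(p+1)}+p^{-s/(p-1)}\bigr)+\frac{1}{p+1}.$$
I focus on the upper tail; the lower tail follows from the symmetric argument applied to $-\gr$, noting that the even-in-$s$ part of $\log\lapr(s)$ is common to both tails while the linear piece flips sign, producing the shift from $A_1=A_0+2\zeta'(2)/\zeta(2)$ to $A_2=A_0-2\gamma$.

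The main technical step is the asymptotic $\log\lapr(s) = s\log s + s\log\log s + (A_1-1)s + O(s/\log s)$ as $s\to+\infty$. Applying the identity $e^A+e^{-B}=2 e^{(A-B)/2}\cosh((A+B)/2)$ to each $f_p$ and using $\sum_p (\log p)/(p^2-1)=-\zeta'(2)/\zeta(2)$, one gets
$$\log\lapr(s) = s\bigl(\gamma+\tfrac{\zeta'(2)}{\zeta(2)}\bigr) + \sum_p\log\cosh(\beta_p s) + O(1),\qquad \beta_p:=\frac{p\log p}{p^2-1},$$
the $O(1)$ absorbing the $1/(p+1)$ correction in $f_p$ and the deviation between $\beta_p$ and $(\log p)/p$. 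Let $\mathcal{H}(s):=\sum_p\log\cosh(\beta_ps)$. Partial summation together with the prime number theorem (with a power-saving error term) gives $\mathcal{H}(s) = \int_{\log 2}^\infty\log\cosh(sy/e^y)(e^y/y)\,dy + O(s^{1-\delta})$. On the monotone branch $y\ge 1$, the substitution $u=sy/e^y$ (so $dy=-y\,du/((y-1)u)$ and $e^y=sy/u$) combined with integration by parts using $(\log\cosh u)'=\tanh u$ reduces the main calculation to
$$s\int_0^{s/e}\frac{\tanh u}{u}\,du = (\log s - 1 + A_0)\,s + o(s).$$
The Jacobian factor $y/(y-1)=1+1/(y-1)$, expanded via the iteration $y=\log s-\log u+\log y+\ldots$, produces the $s\log\log s$ correction and the remaining Mertens-type constants; collecting everything pins down the additive constant and yields the announced expansion with $A_1=A_0+2\zeta'(2)/\zeta(2)$.

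Differentiating gives $(\log\lapr)'(s)=\log s+\log\log s+A_1+O(1/\log s)$, so the saddle-point equation $(\log\lapr)'(\kappa)=\tau$ yields, after iteration, $\kappa = e^{\tau-A_1}/\tau\cdot(1+O(\log\tau/\tau))$. Substituting back, the dominant $\kappa\log\kappa+\kappa\log\log\kappa$ contribution cancels against $\kappa\tau$, leaving
$$\kappa\tau-\log\lapr(\kappa) = \kappa\bigl(1+O(1/\log\kappa)\bigr) = \frac{e^{\tau-A_1}}{\tau}\bigl(1+O(\log\tau/\tau)\bigr).$$
To convert this into a tail bound, I invert via
$$\pr(\gr>\tau) = \frac{1}{2\pi i}\int_{\kappa-i\infty}^{\kappa+i\infty}\lapr(s)e^{-s\tau}\frac{ds}{s},$$
using the quadratic approximation of $\log\lapr(s)-s\tau$ near $\kappa$. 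The vertical decay is controlled by the elementary bound $|\cosh(x+iy)|\le\cosh(x)$ applied termwise to the Euler product, so that $|\lapr(\kappa+it)|\le\lapr(\kappa)$ outside a small neighborhood of the real axis; inside, the second derivative $(\log\lapr)''(\kappa)\asymp\kappa/\log\kappa$ sets the Gaussian width. The standard Laplace-type estimate then produces $\pr(\gr>\tau)=\exp(\log\lapr(\kappa)-\kappa\tau)(1+O(\log\tau/\tau))$, which combined with the preceding saddle calculation gives the stated double-exponential formula.

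\textbf{Main obstacle.} The delicate work is concentrated in Step 2: identifying the exact constant $A_1$ in the $O(s)$ term of $\log\lapr(s)$. The leading order $s\log s$ is easy, but the $s\log\log s$ correction and especially the additive constant depend sensitively on tracking the implicit relation $y=y(u)$ through the iteration $y=\log s-\log u+\log y+\ldots$, as well as on sharpening PNT to Mertens-level accuracy to match the discrete prime sum with the integral. Once $\log\lapr$ is controlled to this precision, the saddle-point inversion itself is entirely standard.
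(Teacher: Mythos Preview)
Your overall strategy matches the paper's: compute $M(s)=\log\lapr(s)$ and $M'(s)$ to sufficient precision (the paper's Proposition~\ref{LaplaceRand}), locate the saddle $\kappa$ via $M'(\kappa)=\tau$, and invert by a Perron-type integral on $\re(s)=\kappa$ (the paper's Theorem~\ref{SaddlePoint}). The algebraic identity $e^A+e^{-B}=2e^{(A-B)/2}\cosh\bigl((A+B)/2\bigr)$ that you use to center each local factor on $\cosh(\beta_p s)$ is a mild variant of the paper's direct small/large-prime split in Lemma~\ref{estimate1}, but leads to the same integral evaluation and the same constants.

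Two technical points need correction, though neither ultimately derails the conclusion. First, $M''(\kappa)\asymp 1/\kappa$, not $\kappa/\log\kappa$ (just differentiate $M'(r)\sim\log r$); the paper records this in \eqref{LaplaceRand5}. This does not affect the final statement, since the polynomial prefactor $1/\bigl(\kappa\sqrt{M''(\kappa)}\bigr)$ is swallowed by the $O(\log\tau/\tau)$ inside the double-exponential exponent, but your claimed Gaussian width is wrong and your asserted multiplicative error $1+O(\log\tau/\tau)$ in the saddle-point formula is far too strong. Second, and more substantively, the bound $|\lapr(\kappa+it)|\le\lapr(\kappa)$ by itself does not make your vertical integral converge: with only a $1/s$ weight the tail $\int_{|t|>T}|s|^{-1}\,dt$ diverges. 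The paper deals with this by smoothing the Perron kernel (Lemma~\ref{SmoothPerron}, inserting a factor $(e^{\lambda s}-1)/(\lambda s)$) \emph{and} by proving genuine decay $|\lapr(\kappa+it)|\le e^{-|t|/(4\log|t|)}\lapr(\kappa)$ for $|t|\ge\kappa$ (Lemma~\ref{DecayLaplace}), obtained from the quadratic term of $\log\cosh$ summed over primes $p>|t|(\log|t|)^2$. You need at least one of these ingredients; the termwise inequality $|\cosh(x+iy)|\le\cosh x$ is not enough on its own.
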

Combining Theorems \ref{MainTheorem} and \ref{ExponentialDecay} we deduce that the same asymptotic estimate holds for the distribution function of $\g$ uniformly for $\tau$ in the range $1\ll \tau \leq \log\log x-2\log\log\log x-C.$

\begin{thm}\label{AsympDistrib}
Let $x$ be large. There exists a positive constant $C$ such that uniformly in the range $1\ll \tau\leq \log\log x-2\log\log\log x-C$, we have 
\begin{equation}\label{AsymptoticEstimate1}
\frac{1}{|\F|}\big|\{D\in \F: \g >\tau \}\big|= \exp\left(-\frac{e^{\tau-A_1}}{\tau}\left(1+ O\left(\frac{\log\tau}{\tau}\right)\right)\right),
\end{equation}
and 
\begin{equation}\label{AsymptoticEstimate2}
\frac{1}{|\F|}\big|\{D\in \F: \g <-\tau \}\big|= \exp\left(-\frac{e^{\tau-A_2}}{\tau}\left(1+ O\left(\frac{\log\tau}{\tau}\right)\right)\right).
\end{equation}
\end{thm}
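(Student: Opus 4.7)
The plan is to derive Theorem \ref{AsympDistrib} by composing Theorem \ref{MainTheorem}, which approximates the arithmetic distribution function by the random-model one, with Theorem \ref{ExponentialDecay}, which gives the double-exponential asymptotic for the random model, and then verifying that the two error terms fit together. I will treat only the upper tail \eqref{AsymptoticEstimate1}; the lower tail \eqref{AsymptoticEstimate2} is identical with $A_2$ in place of $A_1$.

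First I would take logarithms in Theorem \ref{MainTheorem}. In the range $\tau\le\log\log x-2\log\log\log x-C$, the arithmetic error
$$\frac{e^\tau(\log\log x)^3}{\tau\log x}\le\frac{e^{-C}\log\log x}{\tau}$$
is $o(1)$ once $C$ is large enough, so the expansion $\log(1+y)=O(y)$ gives
$$\log\frac{|\{D\in\F:\g>\tau\}|}{|\F|}=\log\pr(\gr>\tau)+O\!\left(\frac{e^\tau(\log\log x)^3}{\tau\log x}\right).$$
Inserting Theorem \ref{ExponentialDecay} for $\log\pr(\gr>\tau)$, the target formula will follow once I show that the added arithmetic error is dominated by the saddle-point error, i.e.
$$\frac{e^\tau(\log\log x)^3}{\tau\log x}\ll\frac{e^{\tau-A_1}\log\tau}{\tau^2}.$$

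This inequality reduces, after dividing by $e^\tau/\tau$ and multiplying by $\tau/\log\tau$, to $(\log\log x)^3\cdot\tau/\log\tau\ll e^{-A_1}\log x$. Since $\tau/\log\tau$ is increasing for $\tau>e$ and is bounded on the range by $\log\log x/\log\log\log x$, the left-hand side is at most $(\log\log x)^4/\log\log\log x=o(\log x)$. Exponentiating the combined expression for the logarithm of $|\{D\in\F:\g>\tau\}|/|\F|$ then yields \eqref{AsymptoticEstimate1}.

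The main point to watch, and effectively the only obstacle, is uniformity in $\tau$: the absorption argument must be checked both for $\tau=O(1)$, where it reduces to $(\log\log x)^3/\log x=o(1)$ and is trivial, and for $\tau$ close to $\log\log x$, where it hinges on choosing the constant $C$ large enough in terms of $A_1$ and $A_2$. Apart from this bookkeeping step, no new analytic input beyond Theorems \ref{MainTheorem} and \ref{ExponentialDecay} is required.
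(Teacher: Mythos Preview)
Your proposal is correct and follows the same route as the paper, which simply states that Theorem~\ref{AsympDistrib} is obtained by ``combining Theorems~\ref{MainTheorem} and~\ref{ExponentialDecay}'' without spelling out the error bookkeeping; you have just made that combination explicit. One small wording point: at the top of the range the multiplicative error from Theorem~\ref{MainTheorem} is $O(e^{-C})$ rather than $o(1)$, but since you only need it bounded below $1$ to apply $\log(1+y)=O(y)$, your argument goes through once $C$ is chosen large enough, exactly as you note.
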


\begin{rem}\label{Support} Note that the asymptotic estimate on the right hand side of \eqref{AsymptoticEstimate1} (or \eqref{AsymptoticEstimate2}) becomes $<1/|\F|$ if $\tau>\log\log x+\log\log\log x+C_0$ for some constant $C_0$. Therefore, if the asymptotic estimates in \eqref{AsymptoticEstimate1} and \eqref{AsymptoticEstimate2} were to persist in this full viable range, then one would deduce that $|\g|\leq \log\log |D|+\log\log\log |D|+O(1).$
\end{rem}

 In  \cite{GrSo}, Granville and Soundararajan investigated the distribution of $L(1,\chi_D)$ and proved that uniformly for $\tau$ in the range $1\ll \tau\leq \log\log x +O(1)$ we have 
$$\frac{1}{|\F|}\big|\{D\in \F: L(1,\chi_D) >e^{\gamma}\tau \}\big|= \exp\left(-\frac{e^{\tau-A_0}}{\tau}\left(1+O\left(\frac{1}{\tau}\right)\right)\right).$$
Their method relies upon careful analysis of  large complex moments of $L(1,\chi_D)$. In her thesis, Mourtada \cite{Mo} remarked that it is a difficult problem to compute complex moments of $\g$.  Instead, our approach relies on computing the Laplace transform of $\g$ (defined as the average of $\exp(s\cdot \g)$ over $D\in \F$) using only asymptotics for integral moments of $\g$. 
We should also note that in comparison to the treatment for $L(1, \chi_D)$, there is an additional technical difficulty in our case which comes from the fact that $\exp({\g})$ grows much faster than $L(1,\chi_D)$. To overcome this difficulty, we compute the Laplace transform of $\g$ after first removing the contribution of a small set of ``bad'' discriminants $D$, namely those for which $\g$ might be large.

\begin{thm}\label{AsympLaplace} Given $0<\epsilon\leq 1/2$ there exists a constant $C_{\epsilon}>0$ and 
a set of fundamental discriminants $\E\subset \F$ with $|\E|=O\left(x^{\epsilon}\right)$, such that for all complex numbers $s$ with $|s|\leq C_{\epsilon}\log x/(\log\log x)^2$ we have 
$$\frac{1}{|\F|}\sum_{D\in \F\setminus \mathcal{E}(x)}\exp\left(s \cdot \g\right) = \ex\Big(\exp\big(s\cdot\gr\big) \Big)+ 
O\left(\exp\left(-\frac{\log x}{50\log\log x}\right)\right).$$
\end{thm}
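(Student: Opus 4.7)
\textbf{Proof plan for Theorem \ref{AsympLaplace}.} The strategy is to reduce $\exp(s\g)$, on a density-one subset of $\F$, to a short Dirichlet polynomial in $\chi_D$, and then to compare it coefficient-by-coefficient against the analogous expansion of $\ex(\exp(s\gr))$ using a standard average estimate for $\sum_{D\in\F}\chi_D(n)$.

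First I would choose $\delta=\delta(\epsilon)>0$ and $T=T(\epsilon)$, and let $\E$ be the set of $D\in\F$ for which $L(s,\chi_D)$ has a zero inside the box $\re s\geq 1-\delta$, $|\im s|\leq T$. Heath-Brown's zero-density estimate (already invoked to deduce Corollary \ref{ASBound}) then gives $|\E|\ll x^{\epsilon}$ for an admissible choice of $\delta, T$. For $D\notin \E$, a standard contour shift applied to $(L'/L)(s,\chi_D)$ produces the approximation
$$\g=\gamma-\sum_{p\leq y}\frac{(\log p)\chi_D(p)}{p-\chi_D(p)}+O\!\left(\exp\!\left(-\frac{c\log x}{\log\log x}\right)\right),$$
for $y$ a suitable small power of $\log x$. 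Expanding each factor $\exp\bigl(-s(\log p)\chi_D(p)/(p-\chi_D(p))\bigr)$ in its Taylor series up to order $K$ and multiplying over $p\leq y$ rewrites $\exp(s\g)$ as $\sum_{n}a_n(s)\chi_D(n)+R$, a Dirichlet polynomial supported on $y$-smooth integers $n\leq y^K$, plus a controlled remainder $R$.

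The random side behaves identically: $\ex(\exp(s\gr))=e^{s\gamma}\prod_p \ex\bigl(\exp(-s(\log p)X(p)/(p-X(p)))\bigr)$ by the independence of the $X(p)$; truncating the product at $y$ costs a negligible amount by absolute convergence of $\gr$, and expanding each factor in the same Taylor series yields $\sum_n a_n(s)\ex(X(n))$ with \emph{identical} coefficients $a_n(s)$. Averaging the deterministic side over $D\in\F\setminus\E$ therefore reduces matters to the character sum estimate
$$\frac{1}{|\F|}\sum_{D\in\F}\chi_D(n)=\ex(X(n))+O\!\left(n^{1/2+o(1)}/x\right),$$
which follows from quadratic reciprocity together with Polya--Vinogradov, and which is precisely the identity which dictates the definition of $X(p)$. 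Summing the termwise comparison over $n\leq y^K$ then delivers $\ex(\exp(s\gr))$ up to an acceptable error, provided $y^K\ll x^{1-\epsilon'}$.

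The chief technical obstacle is the simultaneous calibration of the three parameters $\delta$, $y$ and $K$. The constraints are: $|\E|\leq x^{\epsilon}$ (which fixes the admissible $\delta$); the explicit-formula error, multiplied by $\exp(|s||\g|)$, must beat the target error $\exp(-\log x/(50\log\log x))$ uniformly for $|s|\leq C_\epsilon\log x/(\log\log x)^2$; the Taylor tail $\bigl(|s|\sum_{p\leq y}(\log p)/p\bigr)^K/K!$ must be negligible; and $y^K\leq x^{1-\epsilon'}$ must hold so that the character averaging remains useful. This quadruple balance is what forces the radius $|s|\leq C_\epsilon\log x/(\log\log x)^2$ and produces the peculiar numerical constant $50$ in the exponential error term.
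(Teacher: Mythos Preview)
Your outline differs from the paper's route and, as written, contains a parameter-balancing gap that cannot be closed. The paper does \emph{not} approximate $\g$ once by a short sum and then expand the exponential into a single Dirichlet polynomial. Instead it Taylor-expands $\exp(s\g)$ directly in powers of $\g$ up to order $N=\lfloor\log x/(50\log\log x)\rfloor$, controls the tail via the pointwise bound $|\g|\le B_\epsilon\log\log x$ from Corollary~\ref{ASBound}, and then quotes the moment formula Theorem~\ref{MomentsGamma} for each $k\le N$. The key feature hidden inside Theorem~\ref{MomentsGamma} (via Proposition~\ref{ApproximationLarge} and Corollary~\ref{AAApproximation}) is that the truncation level is allowed to depend on $k$: one takes $y_k=x^{1/k}$, so that the Dirichlet polynomial approximating $(-L'/L)^k$ always has length exactly $x$ while the contour-shift error is $y_k^{-k\delta/4}=x^{-\delta/4}$, uniformly in $k$.

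Your scheme fixes a single $y$ and then expands, and the stated constraints are mutually inconsistent. In order that the explicit-formula error $E$, after amplification by $|\exp(s\g)|\le\exp(B_\epsilon C_\epsilon\log x/\log\log x)$, still beat the target $\exp(-\log x/(50\log\log x))$, one needs $y^{\delta/4}\gg\exp(c\,\log x/\log\log x)$, hence $\log y\gg\log x/\log\log x$; in particular $y$ cannot be ``a small power of $\log x$'' as you write. But with this $y$, and with $K\asymp\log x/\log\log x$ (the order needed to kill the Taylor tail, even granting the sharp bound $|\text{short sum}|\ll\log\log x$), one has $\log(y^K)\asymp(\log x)^2/(\log\log x)^2$, so $y^K$ exceeds every fixed power of $x$ and your requirement $y^K\le x^{1-\epsilon'}$ fails; P\'olya--Vinogradov gives nothing on a polynomial of that length. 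The only way out is to let the truncation vary with the order of the moment---which is precisely the paper's passage through Theorem~\ref{MomentsGamma}.
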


To prove this result we show that large integral moments of $\g$ are very close to those of the random model $\gr$.  For a fixed natural number $k$, asymptotic formulae for the $k$-th moment of $\g$ have been obtained by Mourtada and Murty in \cite{MoMu}, building on an earlier work of Ihara, Murty and Shimura \cite{IMS}. However, the significant feature of our result is the uniformity in the range of moments. 

\begin{thm}\label{MomentsGamma} 
For all positive integers $k$ with $k\leq \log x/(50\log\log x)$ we have 
$$\frac{1}{|\F|}\sum_{D\in  \mathcal{F}^*(x)} \big(\g\big)^k= \ex\left(\big(\gr\big)^k\right) +O\left(x^{-1/30}\right),$$
where 
$ \mathcal{F}^*(x)$ denotes the set of fundamental discriminants $D\in \F$ such that $L(s,\chi_D)$ has no Siegel zeros.
\end{thm}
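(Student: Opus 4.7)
The plan is to approximate $\g$ by a truncated Dirichlet polynomial $-\sum_{n\leq y}\Lambda(n)\chi_D(n)/n$, to match the $k$-th moment of this polynomial against that of the corresponding random truncation via character orthogonality on $\F$, and then to control the tails on both sides. I would take the truncation parameter $y := \exp\bigl(\log x/(3k)\bigr)$: it is large enough (one checks $\log y\geq \tfrac{50}{3}\log\log x$) for the random tail moments to be comfortably small, yet satisfies $y^k \leq x^{1/3}$, which is what allows the orthogonality of $\chi_D$ to be productive after the multinomial expansion.

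For $D\in\Fstar$ there is no Siegel zero, so Perron's formula applied to $L'/L(s,\chi_D)$ together with a contour shift past $s=1$ yields a representation
$$ \g - \gamma = -\sum_{n\leq y}\frac{\Lambda(n)\chi_D(n)}{n} + E_y(D), $$
where $E_y(D)$ is expressible in terms of the low-lying zeros of $L(s,\chi_D)$. Expanding the $k$-th moment of the main term,
$$ \frac{1}{|\F|}\sum_{D\in \Fstar}\left(\sum_{n\leq y}\frac{\Lambda(n)\chi_D(n)}{n}\right)^{\!k} = \sum_{n_1,\ldots,n_k\leq y}\frac{\Lambda(n_1)\cdots\Lambda(n_k)}{n_1\cdots n_k}\cdot\frac{1}{|\F|}\sum_{D\in\Fstar}\chi_D(n_1\cdots n_k). $$
For $N := n_1\cdots n_k \leq y^k \leq x^{1/3}$, a direct count of fundamental discriminants modulo $N^2$ (the very calculation motivating the choice of distribution for the $X(p)$) gives an orthogonality relation of the shape
$$ \frac{1}{|\F|}\sum_{D\in\F}\chi_D(N) = \ex\bigl(X(N)\bigr) + O\bigl(N x^{-1/2}\bigr), $$
while removing $\F\setminus\Fstar$ contributes a negligible additional error since this exceptional set is very small. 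Summed over tuples, this matches the moment $\ex\bigl((-\sum_{n\leq y}\Lambda(n)X(n)/n)^k\bigr)$ of the truncated random series up to a power-saving. For the random tail $\ex\bigl((\gr - \gamma + \sum_{n\leq y}\Lambda(n)X(n)/n)^{2k}\bigr)$, independence of the $X(p)$ combined with $\sum_{p>y}(\log p)^2/p^2 \ll 1/\log y$ yields a standard bound of the shape $(Ck/\log y)^k$, which is far smaller than $x^{-1/30}$ throughout the allowed range of $k$.

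The main obstacle is the $L^k$ bound for the analytic error $E_y(D)$ averaged over $\Fstar$. A single zero $\rho=\beta+i\gamma'$ of $L(s,\chi_D)$ contributes a term of size $y^{\beta-1}/|1-\rho|$ to $E_y(D)$, so a zero even moderately close to $s=1$ can make $E_y(D)$ large. Without GRH one must use the Heath-Brown zero-density estimate (the same input underlying Corollary \ref{ASBound}), together with a dyadic decomposition in both the height and the distance to $\re s=1$, and then Hölder's inequality in the $D$-average, in order to extract a power-saving that survives raising $E_y(D)$ to the $k$-th power. The constant $1/50$ in the range $k\leq \log x/(50\log\log x)$ is precisely what the density theorem can afford once this $k$-th power is accounted for.
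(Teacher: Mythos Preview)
Your plan has a genuine gap at the step you flag as ``the main obstacle,'' and the zero-density input cannot close it. The problem is not the exceptional discriminants but the generic ones. With $y=\exp\bigl(\log x/(3k)\bigr)$ and $k$ near the top of the allowed range, $y$ is only a fixed power of $\log x$. For $D$ outside the Heath-Brown exceptional set (zero-free for $\re(s)>1-\delta$, $|\im(s)|\leq T$) the contour shift only yields $E_y(D)\ll y^{-c\delta}$, i.e.\ a fixed negative power of $\log x$. But the binomial cross-terms between $E_y(D)$ and the truncated sum carry a factor $(\log y)^{k-1}\asymp(\log\log x)^{k-1}$, whose logarithm is $\asymp(\log x)(\log\log\log x)/(\log\log x)$; this grows faster than any power of $\log x$ and overwhelms $y^{-c\delta}$. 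No H\"older or dyadic refinement helps, because the bottleneck is the \emph{pointwise} size of $E_y(D)$ on the bulk of $\Fstar$, not the measure of an exceptional set. (Incidentally, $\sum_{p>y}(\log p)^2/p^2\asymp(\log y)/y$, not $1/\log y$; but the random tail is not where the argument breaks.)

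The paper sidesteps this by applying Perron's formula not to $-L'/L(1+s,\chi_D)$ but directly to its $k$-th power: one writes
\[
\left(-\frac{L'}{L}(1+s,\chi_D)\right)^k=\sum_{n\geq 1}\frac{\Lambda_k(n)\chi_D(n)}{n^{1+s}}
\]
and truncates at $n\leq y^k$ in one stroke (Proposition~\ref{ApproximationLarge}). Shifting the contour to $\re(s)=-\delta/2$ and bounding $|L'/L|^k$ there via Lemma~\ref{BoundLD} gives an error $\ll y^{-k\delta/4}$ times factors of size $(\log)^{k}$. With $y=x^{1/k}$ this is $\ll x^{-\delta/4}$, a genuine power saving in $x$ that absorbs all logarithmic losses; there are no cross-terms to control. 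The orthogonality step then runs over $n\leq x$ (squares via the count of $D$ coprime to $m$, non-squares via P\'olya--Vinogradov), and the small exceptional set from zero-density is disposed of using only the classical pointwise bound $|L'/L(1,\chi_D)|\ll(\log|D|)^2$ valid for $D\in\Fstar$. Finally, Theorem~\ref{MomentsGamma} follows from this asymptotic for the moments of $-L'/L(1,\chi_D)$ by a binomial expansion in $\gamma$, rather than by truncating $\g$ itself.
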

\begin{rem} Note that if $L(s, \chi_D)$ has a Siegel zero, we could have $
\g$ as large as $q^{\epsilon}$, so that when $k$ is large, the $k$-th moment of $\g$ would be heavily affected by the contribution of this particular character. This justifies  the condition $D\in \F^*$ in Theorem \ref{MomentsGamma}. Furthermore, it is known that these characters if they exist must be very rare, in particular we have $|\F|-|\F^*|\ll \log x$ (see for example  \cite{Da}). 

\end{rem}
The paper is organized as follows: In Section 2 we investigate the moments of $\g$ and prove Theorem \ref{MomentsGamma}. This result is then used to study the Laplace transform of $\g$ and prove Theorem \ref{AsympLaplace} in Section 3. In Section 4 we study the Laplace transform of the random model $\gr$ and prove an asymptotic estimate for it.  We then relate the distribution function of $\gr$ to its Laplace transform and prove Theorem \ref{ExponentialDecay} in Section 5. Finally, in Section 6 we combine all these results to derive Theorem \ref{MainTheorem}.

\section{Large moments of $\g$: proof of Theorem \ref{MomentsGamma}}

\noindent For any positive integer $k$, we define $$\Lambda_{k}(n)=\sum_{\substack{n_1,n_2,\dots,n_k\geq 1\\ n_1n_2\cdots n_k=n}}\Lambda(n_1)\Lambda(n_2)\cdots\Lambda(n_k).$$
Then for all complex numbers $s$ with $\re(s)>1$
we have
$$ \left(-\frac{L'}{L}(s,\chi_D)\right)^k=\sum_{n=1}^{\infty} \frac{\Lambda_k(n)}{n^s}\chi_D(n).$$
Moreover, note that
\begin{equation}\label{boundLambda}
\Lambda_{k}(n) \leq \left(\sum_{m|n}\Lambda(m)\right)^k= (\log n)^k.
\end{equation}
We shall extract Theorem \ref{MomentsGamma} from the following result, which gives an asymptotic formula for large integral moments of $-L'/L(1,\chi_D)$. 
\begin{thm}\label{MomentsEuler} 
For all positive integers $k$ with $k\leq \log x/(50\log\log x)$ we have 
$$\frac{1}{|\F|}\sum_{D\in \mathcal{F}^*(x)} \left(-\frac{L'}{L}(1,\chi_D)\right)^k= \sum_{m=1}^{\infty}
\frac{\Lambda_{k}(m^2)}{m^2}\prod_{p|m}\left(\frac{p}{p+1}\right) +O\left(x^{-1/20}\right).$$
\end{thm}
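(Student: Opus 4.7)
The plan is to approximate $-L'/L(1,\chi_D)$ by a short Dirichlet polynomial supported on prime powers $n \le y$, raise this polynomial to the $k$-th power, and average over $D \in \Fstar$ using orthogonality of quadratic characters. I will choose $y = x^{1/(Ck)}$ for a suitably large absolute constant $C$, so that the $k$-th power is supported on $n \le y^{k} = x^{1/C}$, well below $|\F|$ in size.

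The first step is to establish, for most $D \in \Fstar$, a pointwise identity of the shape
$$-\frac{L'}{L}(1,\chi_D) = \sum_{n \le y}\frac{\Lambda(n)\chi_D(n)}{n}\Phi\!\left(\frac{\log n}{\log y}\right) + R(D,y)$$
for a smooth cutoff $\Phi$ with $\Phi(0)=1$. This follows from Mellin inversion applied to $\Phi$ and shifting the contour for $\frac{1}{2\pi i}\int_{(2)}(L'/L)(1+s,\chi_D)\widetilde{\Phi}(s)y^{s}\,ds$ past $s=0$: the pole at $s=0$ contributes $-L'/L(1,\chi_D)$, while the shifted contour and the residues at non-trivial zeros $\rho$ of $L(s,\chi_D)$ generate $R(D,y)$. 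Using Heath-Brown's zero density estimate for the quadratic family \cite{HB}, together with the absence of Siegel zeros on $\Fstar$, one can show that $R(D,y) \ll x^{-\delta}$ for all $D \in \Fstar$ outside an exceptional set of size $O(x^{\epsilon})$. For those exceptional $D$, the crude bound $|L'/L(1,\chi_D)|\ll (\log x)^{2}$ (valid outside a thin set by the same zero-density input) keeps the contribution to the $k$-th moment within the target error $O(x^{-1/20})$, since $k \le \log x/(50\log\log x)$.

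Next, expanding
$$\left(\sum_{n\le y}\frac{\Lambda(n)\chi_D(n)}{n}\Phi(\log n/\log y)\right)^{\!k} = \sum_{n\le y^{k}}\frac{a_{k}(n)\chi_D(n)}{n}$$
with $a_{k}$ a truncated version of $\Lambda_{k}$, and summing over $D \in \F$ using the standard equidistribution input $\frac{1}{|\F|}\sum_{D\in \F}\chi_D(n) = \mathbf{1}_{n=m^{2}}\prod_{p\mid m}\tfrac{p}{p+1} + O(n^{o(1)}/\sqrt x)$, the main term isolates the square integers $n=m^{2}$, producing $\sum_{m^{2}\le y^{k}}\Lambda_{k}(m^{2})m^{-2}\prod_{p\mid m}(p/(p+1))$ up to the smooth cutoff. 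The cutoff and the tail $m^{2} > y^{k}$ are removed using $\Lambda_{k}(m^{2}) \le (2\log m)^{k}$; the off-diagonal (non-square) contribution is bounded by $x^{-1/2+o(1)}\sum_{n\le y^{k}}\Lambda_{k}(n)/\sqrt n$, which in view of $\Lambda_{k}(n) \le (k\log y)^{k} \le (\log x/C)^{k} \le x^{1/50}$ on the stipulated range of $k$ is comfortably $O(x^{-1/20})$.

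The main obstacle is the first step: producing the pointwise approximation with an error uniform enough to accommodate $k$ as large as $\log x/(50\log\log x)$. The zero-density input must rule out those $D \in \Fstar$ whose $L$-function has zeros too close to $s=1$, without inflating the exceptional set beyond what the polynomial bound on exceptional $D$ can absorb after being raised to the $k$-th power. Calibrating the truncation length $y$, the strength of the zero-density estimate, and the bound on exceptional discriminants is the delicate technical heart of the argument.
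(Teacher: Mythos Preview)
Your approach is essentially the same as the paper's: zero-density input from Heath-Brown to control an exceptional set, a contour-shift/Perron argument to replace $(-L'/L(1,\chi_D))^k$ by a short Dirichlet polynomial for non-exceptional $D$, the crude bound $|L'/L(1,\chi_D)|\ll(\log x)^2$ on $\Fstar$ (which, incidentally, comes from the classical zero-free region rather than zero-density) to absorb the exceptional contribution, and then averaging over $D$ with the squares giving the main term and Pólya--Vinogradov handling the non-squares. The one technical simplification in the paper is that it applies Perron's formula directly to $\big(-L'/L(1+s,\chi_D)\big)^k$ with a sharp cutoff, landing immediately on $\sum_{n\le y^k}\Lambda_k(n)\chi_D(n)/n$; this sidesteps both your smooth weight $\Phi$ (and the attendant cutoff removal for the coefficients $a_k$) and the binomial estimate needed to pass from $(A+R)^k$ to $A^k$.
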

First, we need the following lemma, which provides a bound for $L'/L(s,\chi_D)$ when $s$ is far from a zero of $L(z, \chi_D)$. 

\begin{lem}\label{BoundLD}
Let $t$ be a real number and suppose that $L(z,\chi_D)$ has no zero for $\re(z)>\sigma_0$ and $|\im(z)|\leq |t|+1$, then for any $\sigma>\sigma_0$ we have 
$$\frac{L'}{L}(\sigma+it,\chi_D)\ll \frac{\log(D(|t|+2))}{\sigma-\sigma_0}.$$
\end{lem}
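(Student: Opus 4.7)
The plan is to derive the bound from the classical Hadamard factorization of $L(s, \chi_D)$, following the template used for $\zeta(s)$ and Dirichlet $L$-functions in Davenport's textbook. Writing the completed $L$-function as $\xi(s, \chi_D) = (D/\pi)^{(s+a)/2}\Gamma((s+a)/2)L(s, \chi_D)$ with $a \in \{0,1\}$ and taking its logarithmic derivative yields an identity of the form
$$\frac{L'}{L}(s, \chi_D) = -\frac{1}{2}\log\frac{D}{\pi} - \frac{1}{2}\frac{\Gamma'}{\Gamma}\!\left(\frac{s+a}{2}\right) + B(\chi_D) + \sum_{\rho}\left(\frac{1}{s-\rho} + \frac{1}{\rho}\right),$$
where the sum runs over the non-trivial zeros $\rho = \beta + i\gamma$ of $L(s, \chi_D)$.

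Next I would evaluate this identity at $s = \sigma + it$ and at the anchor point $s_0 = 2 + it$ and subtract. The constant $B(\chi_D)$ then cancels, the conductor contribution cancels as well, and the gamma contribution is bounded by $O(\log(|t|+2))$ via Stirling. Using that the Dirichlet series for $-L'/L$ converges absolutely at $s_0$ and gives $L'/L(s_0, \chi_D) = O(1)$, I obtain
$$\frac{L'}{L}(\sigma + it, \chi_D) = \sum_{\rho}\left(\frac{1}{s-\rho} - \frac{1}{s_0-\rho}\right) + O(\log(D(|t|+2))).$$

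I would then split the zero sum according to whether $|\gamma - t| \leq 1$ or $|\gamma - t| > 1$. For the distant zeros the difference telescopes to $|1/(s-\rho) - 1/(s_0-\rho)| \ll 1/(\gamma - t)^2$, and invoking the standard counting estimate $\#\{\rho : |\gamma - T| \leq 1\} \ll \log(D(|T|+2))$ bounds this part by $O(\log(D(|t|+2)))$. For the near zeros the hypothesis forces $\beta \leq \sigma_0$, hence $|s - \rho| \geq \sigma - \sigma_0$ and $|s_0 - \rho| \geq 2 - \sigma_0 \geq 1$ (assuming $\sigma_0 < 1$, since otherwise the statement is either vacuous or handled directly by the Dirichlet series for $\sigma \geq 2$); each of the at most $O(\log(D(|t|+2)))$ terms in this range therefore contributes $O(1/(\sigma - \sigma_0))$. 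Summing the two parts gives the claimed bound.

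The only technical point requiring real care is the zero-counting estimate used in both the near and far regimes, but this is a classical result for Dirichlet $L$-functions which I would simply cite (Davenport, Chapter 16). The case $\sigma \geq 2$ is handled trivially: the Dirichlet series yields $|L'/L(\sigma + it, \chi_D)| \leq \sum_n \Lambda(n)/n^2 = O(1)$, and since then $\sigma - \sigma_0 \geq 2 - \sigma_0 \gg 1$, the claimed bound is automatic.
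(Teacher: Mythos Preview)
Your proposal is correct and follows essentially the same route as the paper. The paper quotes directly the formula from Davenport (Chapter~16, equation~(4)) expressing $L'/L(\sigma+it,\chi_D)$ as the sum over nearby zeros $\sum_{|t-\im(\rho)|<1}1/(\sigma+it-\rho)$ plus $O(\log(D(|t|+2)))$, and then bounds each of the $O(\log(D(|t|+2)))$ terms by $1/(\sigma-\sigma_0)$; you instead re-derive that very formula from the Hadamard product by subtracting at $s_0=2+it$ and splitting into near and far zeros, which is exactly how Davenport obtains it.
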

\begin{proof}
Let $\rho$ runs over the non-trivial zeros of $L(s,\chi)$. Then it follows from equation (4) of Chapter 16 of Davenport \cite{Da} that 
\begin{align*}
\frac{L'}{L}(\sigma+it,\chi_D)
&= \sum_{\substack{\rho\\ |t-\im(\rho)|<1}}\frac{1}{\sigma+it-\rho}+ O\big(\log(D(|t|+2))\big)\\
&\ll \frac{1}{\sigma-\sigma_0}\left(\sum_{\substack{\rho\\ |t-\im(\rho)|<1}} 1\right) + \log(D(|t|+2))\\
&\ll \frac{\log(D(|t|+2))}{\sigma-\sigma_0},
\end{align*}
as desired.

\end{proof}

The key ingredient in the proof of Theorem \ref{MomentsEuler} is the following result which shows that we can approximate large powers of $-L'/L(1,\chi_D)$ by short Dirichet polynomials, if $L(s,\chi_D)$ has no zeros in a certain region to the left of the line $\re(s)=1$. 
\begin{pro}\label{ApproximationLarge} Let $0<\delta<1/2$ be fixed, and $D$ be a fundamental discriminant with $|D|$  large. Let $y\geq (\log |D|)^{10/\delta}$ be a  real number and $k\leq 2\log |D|/\log y$ be a positive integer. If $L(s,\chi_D)$ is non-zero for $\re(s)>1-\delta$ and $|\im(s)|\leq y^{k\delta}$, then we have 
$$ \left(-\frac{L'}{L}(1,\chi_D)\right)^k=\sum_{n\leq y^k} \frac{\Lambda_k(n)}{n}\chi_D(n)+O_{\delta}\Big(y^{-k\delta/4}\Big).$$
\end{pro}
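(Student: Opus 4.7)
The plan is to apply a truncated Perron formula to the Dirichlet series
$$F(w):=\left(-\frac{L'}{L}(1+w,\chi_D)\right)^k=\sum_{n=1}^{\infty}\frac{\Lambda_k(n)\chi_D(n)}{n^{1+w}},$$
which converges absolutely for $\re(w)>0$, and then to shift the resulting contour past $w=0$ into the zero-free region guaranteed by the hypothesis. Fixing the parameters $c=1/(k\log y)$ (so that $y^{kc}=e$) and $T=y^{k\delta}$, the standard truncated Perron formula gives
$$\sum_{n\leq y^k}\frac{\Lambda_k(n)\chi_D(n)}{n}=\frac{1}{2\pi i}\int_{c-iT}^{c+iT}F(w)\frac{y^{kw}}{w}dw+E_1,$$
where $E_1$ denotes the usual tail error.

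Next I would move the line of integration from $\re(w)=c$ to $\re(w)=-\delta/2$. By hypothesis, $L(s,\chi_D)$ is holomorphic and nonvanishing on the entire rectangle $-\delta/2\leq\re(w)\leq c,\ |\im(w)|\leq T$, so $F(w)y^{kw}/w$ is meromorphic there with the unique singularity being the simple pole of $1/w$ at the origin, whose residue equals $F(0)=(-L'/L(1,\chi_D))^k$. That residue is precisely the main term I am after, so everything else must go into an error of size $O_\delta(y^{-k\delta/4})$.

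For the shifted vertical segment and the two horizontal connectors at $\im(w)=\pm T$ I would use Lemma \ref{BoundLD}, which gives the pointwise bound $|L'/L(1+w,\chi_D)|\ll \log(|D|(|\im(w)|+2))/\delta$ on the entire contour and hence $|F(w)|\leq(C\log(|D|T)/\delta)^k$. On the new vertical line $|y^{kw}|\leq y^{-k\delta/2}$, and integrating against $dw/w$ yields a bound of order $y^{-k\delta/2}(\log(|D|T))^k\log T/\delta^k$; the horizontal segments carry the additional saving $1/T=y^{-k\delta}$. Translating the two hypotheses $y\geq(\log|D|)^{10/\delta}$ and $k\leq 2\log|D|/\log y$ into $\delta\log y\geq 10\log\log|D|$ and $\log(|D|T)\leq 4\log|D|$, one checks that $k\log\log(|D|T)\leq k\log\log|D|+O(k)$ is dominated by the saving $k\delta\log y/2\geq 5k\log\log|D|$ from $y^{-k\delta/2}$, which delivers the bound $y^{-k\delta/4}$. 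The Perron error is handled in the same spirit: the classical estimate $E_1\ll y^{kc}T^{-1}(-\zeta'/\zeta(1+c))^k\ll e(k\log y)^k/y^{k\delta}$, combined with $k\log y\leq 2\log|D|\leq 2y^{\delta/10}$, gives $E_1\ll y^{-k\delta/2}$.

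The only genuine obstacle is the parameter bookkeeping: the factor $(\log(|D|T)/\delta)^k$ coming from applying Lemma \ref{BoundLD} to a $k$-th power must be comfortably beaten by the saving $y^{k\delta/2}$ from the contour shift, and this is exactly what the exponent $10/\delta$ in the hypothesis $y\geq(\log|D|)^{10/\delta}$ is calibrated to guarantee. Once these inequalities are verified, no further analytic input beyond Lemma \ref{BoundLD} is required.
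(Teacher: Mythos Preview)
Your proposal is essentially the paper's own argument: truncated Perron at $c=1/(k\log y)$, shift to $\re(w)=-\delta/2$, pick up the residue at $w=0$, and bound the new contours via Lemma~\ref{BoundLD} together with the parameter inequalities forced by $y\ge(\log|D|)^{10/\delta}$ and $k\log y\le 2\log|D|$. Two small points to tighten: the paper takes $T=y^{k\delta/2}$ rather than $T=y^{k\delta}$ so that Lemma~\ref{BoundLD} (which needs the zero-free region up to height $|t|+1$) applies cleanly inside the hypothesised strip; and your Perron remainder $E_1\ll y^{kc}T^{-1}(-\zeta'/\zeta(1+c))^k$ suppresses the factor $1/|\log(y^k/n)|$, so the terms with $n$ near $y^k$ still need the standard splitting (as the paper does after assuming $y^k\in\mathbb{Z}+\tfrac12$), which yields the extra harmless logarithmic factor $(2k\log y)$.
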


\begin{proof}
Without loss of generality, suppose that $y^k\in \mathbb{Z}+1/2$. Let $c=1/(k\log y)$, and $T$ be a large real number to be chosen later. Then by Perron's formula, we have 
$$ \frac{1}{2\pi i}\int_{c-iT}^{c+iT} \left(-\frac{L'}{L}(1+s,\chi_D)\right)^k \frac{y^{ks}}{s}ds= 
\sum_{n\leq y^k} \frac{\Lambda_k(n)}{n}\chi_D(n)+ O\left(\frac{y^{kc}}{T}\sum_{n=1}^{\infty} \frac{\Lambda_k(n)}{n^{1+c}|\log(y^k/n)|}\right).$$
To bound the error term of this last estimate, we split the sum into three parts: $n\leq y^k/2$, $y^k/2<n<2y^k$ and $n\geq 2y^k$. The terms in the first and third parts satisfy $|\log(y^k/n)|\geq \log 2$, and hence their contribution is 
$$\ll \frac{1}{T} \sum_{n=1}^{\infty}\frac{\Lambda_k(n)}{n^{1+c}}=\frac{1}{T} \left(\sum_{n=1}^{\infty}\frac{\Lambda(n)}{n^{1+c}}\right)^k\ll \frac{(2k\log y)^k}{T},$$
by the prime number theorem. To handle the contribution of the terms $y^k/2<n<2y^k$, we put $r=n-y^k$, and use that $|\log(y^k/n)|\gg |r|/y^k$. In this case, we have $\Lambda_k(n)\leq (\log n)^k\leq (2k\log y)^k$, and hence the contribution of these terms is 
$$\ll\frac{(2k\log y)^{k}}{Ty^{k}}\sum_{|r|\leq y^{k}}\frac{y^k}{|r|}\ll \frac{(2k\log y)^{k+1}}{T}.$$
We now choose $T=y^{k\delta/2}$ and move the contour to the line $\re(s)=-\delta/2$. By our assumption, we only encounter a simple pole at $s=0$ which leaves a residue $(-L'/L(1,\chi_D))^k$. Therefore, we deduce that 
$$ \frac{1}{2\pi i}\int_{c-iT}^{c+iT} \left(-\frac{L'}{L}(1+s,\chi_D)\right)^k \frac{y^{ks}}{s}ds= \left(-\frac{L'}{L}(s,\chi_D)\right)^k+ E_1,$$
where 
\begin{align*}
E_1&=\frac{1}{2\pi i} \left(\int_{c-iT}^{-\delta/2-iT}+ \int_{-\delta/2-iT}^{-\delta/2+iT}+ \int_{-\delta/2+iT}^{c+iT}\right) \left(-\frac{L'}{L}(1+s,\chi_D)\right)^k \frac{y^{ks}}{s}ds\\
& \ll_{\delta} \frac{(\log (|D|T))^k}{T}+ y^{-k\delta/2}\left(\frac{\log (|D|T)}{\delta}\right)^{k+1}\\
&\ll_{\delta} y^{-k\delta/4},
\end{align*}
by Lemma \ref{BoundLD}. Finally, since $(2k\log y)^{k+1}/T\ll y^{-k\delta/4}$, the result follows.  
\end{proof}

Now, using a zero density estimate due to Heath-Brown (see equation \eqref{ZeroDensity} below), we deduce from Proposition \ref{ApproximationLarge} that large powers of $-L'/L(1,\chi_D)$ can be approximated by short Dirichlet polynomials for almost all fundamental discriminants $D$ with $|D|\leq x$. 

\begin{cor}\label{AAApproximation}
Let $k$ be a positive integer such that $k\leq \log x/(50(\log\log x))$.  For all except $O(x^{3/4})$ fundamental discriminants $D$ with $|D|\leq x$ we have 
$$ \left(-\frac{L'}{L}(1,\chi_D)\right)^k=\sum_{n\leq x} \frac{\Lambda_k(n)}{n}\chi_D(n)+O\Big(x^{-1/20}\Big).$$
\end{cor}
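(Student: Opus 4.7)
The plan is to apply Proposition \ref{ApproximationLarge} with the parameter choice $y=x^{1/k}$ (so that the Dirichlet polynomial length $y^k$ equals $x$) and a fixed constant $\delta\in(1/5,1/2)$. With this choice the proposition's error term $O_{\delta}(y^{-k\delta/4})=O_{\delta}(x^{-\delta/4})$ is comfortably $\ll x^{-1/20}$. Under the hypothesis $k\leq(\log x)/(50\log\log x)$ one has $\log y=(\log x)/k\geq 50\log\log x$, hence $y\geq(\log x)^{50}\geq(\log|D|)^{10/\delta}$, which verifies the first hypothesis of the proposition once $\delta\geq 1/5$. The proposition also requires $k\leq 2\log|D|/\log y$, which with our choice of $y$ reduces to $|D|\geq x^{1/2}$; since only $O(x^{1/2})$ fundamental discriminants fail this, they are simply absorbed into the exceptional set.

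The remaining hypothesis of Proposition \ref{ApproximationLarge} is that $L(s,\chi_D)$ have no zeros in the rectangle $\re(s)>1-\delta$, $|\im(s)|\leq y^{k\delta}=x^{\delta}$. Call $D$ \emph{bad} if this fails. Applying Heath-Brown's zero-density estimate for real primitive characters, summed over fundamental discriminants $|D|\leq x$ (and dyadically over $\sigma$), one obtains a bound of the form
$$ \#\{\text{bad }D:|D|\leq x\}\ll (x\cdot x^{\delta})^{A\delta+\epsilon} $$
for some absolute constant $A$ coming from the density theorem; choosing $\delta$ small enough while keeping $\delta>1/5$ makes this $O(x^{3/4})$. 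For every non-bad $D$ with $x^{1/2}\leq|D|\leq x$, Proposition \ref{ApproximationLarge} directly yields the desired identity, and the total exceptional set is $O(x^{3/4})$ as claimed.

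The only real delicacy is the selection of the constant $\delta$: it must be large enough that the proposition's approximation error meets the target $x^{-1/20}$ (forcing $\delta\geq 1/5$), yet small enough that the zero-density input produces at most $O(x^{3/4})$ exceptions. These two demands are compatible for an explicit subinterval of $(1/5,1/2)$, and once $\delta$ is fixed in this interval the corollary reduces to applying Proposition \ref{ApproximationLarge} uniformly over the ``good'' discriminants. I do not expect any substantive obstacle: both main ingredients are already available, and the argument amounts to selecting parameters and dumping the $O(x^{3/4})$ unavoidable discriminants (small $|D|$, together with the bad ones) into the exceptional set.
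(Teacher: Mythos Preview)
Your approach is essentially identical to the paper's: take $y=x^{1/k}$ in Proposition \ref{ApproximationLarge}, absorb the $O(x^{1/2})$ discriminants with $|D|<x^{1/2}$ into the exceptional set, and use Heath-Brown's zero-density estimate \eqref{ZeroDensity} to bound the number of $D$ for which $L(s,\chi_D)$ has a zero in the relevant rectangle. The paper simply fixes $\delta=1/5$ (rather than searching for an interval) and plugs $T=x^{1/5}$ into the explicit bound $(xT)^{\epsilon}x^{3\delta/(1+\delta)}T^{(1+2\delta)/(1+\delta)}$ to get exponent $11/15+\epsilon<3/4$ directly; your stated shape $(x\cdot x^{\delta})^{A\delta+\epsilon}$ for the density bound is not quite the correct form, but the conclusion and the overall argument are the same.
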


\begin{proof} Let $N(\sigma, T, \chi_D)$ denote the number of zeros of $L(s, \chi_D)$ in the rectangle $ \sigma<\re(s)\leq 1$ and $|\im(s)|\leq T$. Health-Brown \cite{HB} showed that
\begin{equation}\label{ZeroDensity}
 \sum_{D\in \F} N(1-\delta, T, \chi_D)\ll_{\epsilon} (xT)^{\epsilon} x^{3\delta/(1+\delta)} T^{(1+2\delta)/(1+\delta)}.
\end{equation}
Choosing $\delta=1/5$, we deduce that for all except $O(x^{3/4})$ fundamental discriminants $D$ with $|D|\leq x$, $L(s, \chi_D)$ does not vanish in the region $\re(s)>1-\delta$ and $|\im(s)|\leq x^{\delta}$. We now take $y=x^{1/k}$  in Proposition \ref{ApproximationLarge}, to obtain that for all except $O(x^{3/4})$ fundamental discriminants $D$ with $\sqrt{x}\leq |D|\leq x$ we have 
$$ \left(-\frac{L'}{L}(1,\chi_D)\right)^k=\sum_{n\leq x} \frac{\Lambda_k(n)}{n}\chi_D(n)+O\Big(x^{-1/20}\Big),$$
as desired.

\end{proof}
We also deduce from Proposition \ref{ApproximationLarge} that $\g\ll \log\log |D|$ for almost all fundamental discriminants $|D|\leq x$.

\begin{cor}\label{ASBound}
Let $\epsilon>0$. Then for all but $O(x^{\epsilon})$ fundamental discriminants $|D|\leq x$ we have
$$ \g\ll_{\epsilon} \log\log D.$$
\end{cor}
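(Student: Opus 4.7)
The plan is to combine Heath-Brown's zero density estimate \eqref{ZeroDensity} with Proposition \ref{ApproximationLarge} applied at $k=1$. Since $\g=\gamma+L'/L(1,\chi_D)$, it is enough to bound $|L'/L(1,\chi_D)|$ by $\log\log|D|$, and the short Dirichlet polynomial approximation of that proposition is perfectly suited to deliver such a bound once we have removed the discriminants whose $L$-function has a zero too close to $\re(s)=1$.

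Given $\epsilon>0$, I would fix a small parameter $\delta=\delta(\epsilon)>0$ and apply \eqref{ZeroDensity} with $T=(\log x)^{10}$. This bounds the number of $D\in\F$ for which $L(s,\chi_D)$ has at least one zero in the rectangle $\re(s)>1-\delta$, $|\im(s)|\leq(\log x)^{10}$ by
$$\ll_{\epsilon} x^{\epsilon/2+3\delta/(1+\delta)}(\log x)^{O(1)}.$$
Choosing $\delta$ small enough in terms of $\epsilon$ makes this $O(x^{\epsilon})$, and these discriminants are placed into the exceptional set.

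For each remaining ``good'' $D$, say with $|D|$ above a fixed threshold $D_0=D_0(\delta)$, I would apply Proposition \ref{ApproximationLarge} with $k=1$ and $y=(\log|D|)^{10/\delta}$. The three hypotheses all match: $y\geq(\log|D|)^{10/\delta}$ holds by construction; $y^{k\delta}=(\log|D|)^{10}\leq(\log x)^{10}$ lies inside the zero-free region produced above; and $k=1\leq 2\log|D|/\log y$ as soon as $|D|\geq D_0$. The proposition then yields
$$-\frac{L'}{L}(1,\chi_D)=\sum_{n\leq y}\frac{\Lambda(n)\chi_D(n)}{n}+O_\delta\bigl(y^{-\delta/4}\bigr),$$
and bounding the Dirichlet sum trivially by $\sum_{n\leq y}\Lambda(n)/n=\log y+O(1)=(10/\delta)\log\log|D|+O(1)$ gives $\g\ll_\epsilon\log\log|D|$, as required. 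The $O_\delta(1)$ discriminants with $|D|\leq D_0$ may simply be absorbed into the exceptional set. The only real balancing act is that $\delta$ must be taken small enough to tame the zero-density exponent $3\delta/(1+\delta)$, but not so small as to inflate the implicit constant $10/\delta$ out of the $\ll_\epsilon$ notation; neither demand presents any serious obstacle.
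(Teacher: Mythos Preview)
Your proposal is correct and follows essentially the same route as the paper's own proof: the paper takes $\delta=\epsilon/5$, $k=1$, and $y=(\log|D|)^{50/\epsilon}=(\log|D|)^{10/\delta}$ in Proposition~\ref{ApproximationLarge}, invokes the zero-density estimate \eqref{ZeroDensity} to dispose of the bad discriminants, and then bounds the resulting Dirichlet polynomial trivially---exactly as you do. Your version is in fact slightly more explicit about the choice of $T$ and the verification of hypotheses, but the argument is the same.
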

\begin{proof}
Taking $\delta=\epsilon/5$, $k=1$ and $y=(\log |D|)^{50/\epsilon}$ in Proposition \ref{ApproximationLarge} and using \eqref{ZeroDensity} as in the proof of Corollary \ref{AAApproximation} we deduce that for all except $O(x^{\epsilon})$ fundamental discriminants $D$ with $|D|\leq x$, we have 
\begin{align*}
\g&=\gamma+ \frac{L'}{L}(1,\chi_D)=\gamma-\sum_{n\leq y} \frac{\Lambda(n)}{n}\chi_D(n)+O\Big(y^{-\epsilon/20}\Big)\\
&\ll_{\epsilon} \log\log |D|.
\end{align*}
\end{proof}

\begin{proof}[Proof of Theorem \ref{MomentsEuler}]
Let $\E$ be the exceptional set in Corollary \ref{AAApproximation}. Then it follows from this result that
$$ 
\frac{1}{|\F|}\sum_{D\in \Fstar\setminus \E} \left(-\frac{L'}{L}(1,\chi_D)\right)^k
= \frac{1}{|\F|}\sum_{D\in \Fstar\setminus \E} \sum_{n\leq x} \frac{\Lambda_k(n)}{n}\chi_D(n)+O\Big(x^{-1/20}\Big).
$$
Note that 
\begin{equation}\label{BoundPowerLambda}
\sum_{n\leq x}\frac{\Lambda_k(n)}{n}\leq \left(\sum_{n\leq x}\frac{\Lambda(n)}{n}\right)^k\leq (2\log x)^k \ll x^{1/40},
\end{equation}
if $x$ is large enough. Hence, we deduce that
\begin{equation}\label{MomentsMain} \frac{1}{|\F|}\sum_{D\in \Fstar\setminus \E} \left(-\frac{L'}{L}(1,\chi_D)\right)^k
= \frac{1}{|\F|}\sum_{D\in \F} \sum_{n\leq x} \frac{\Lambda_k(n)}{n}\chi_D(n)+O\Big(x^{-1/20}\Big).
\end{equation}
To evaluate the sum on the right hand side of this estimate, we first consider the contribution of perfect squares, which gives the main term. In this case, we use the following standard estimate (see for example \cite{GrSo})
$$ \sum_{D\in \F} \chi_D(m^2)=\sum_{\substack{D\in \F\\ (D,m)=1}}1=\frac{6}{\pi^2}x\prod_{p|m}\left(\frac{p}{p+1}\right)+ O\big(x^{1/2} d(m)\big),$$
where $d(m)$ is the divisor function.
Therefore the contribution of the terms $n=m^2$ to the right hand side of \eqref{MomentsMain} equals
\begin{equation}\label{diagonalChar}
\sum_{m\leq \sqrt{x}}
\frac{\Lambda_{k}(m^2)}{m^2}\prod_{p|m}\left(\frac{p}{p+1}\right)+O\left(x^{-1/2}\sum_{m\leq \sqrt{x}}
\frac{\Lambda_{k}(m^2)}{m^2}d(m)\right).
\end{equation}
By \eqref{boundLambda}, the error term in the last estimate is
\begin{equation}\label{Error2.6}
\ll x^{-1/2}\sum_{m\leq \sqrt{x}}\frac{(2\log m)^k d(m)}{m^2}\leq x^{-1/2} (\log x)^k\sum_{m=1}^{\infty}\frac{d(m)}{m^2}\ll x^{-1/4}.
\end{equation}
Further, since the function $(\log t)^k/\sqrt{t}$ is decreasing for $t\geq e^{2k}$,   we obtain
$$ \sum_{m> \sqrt{x}}
\frac{\Lambda_{k}(m^2)}{m^2}\prod_{p|m}\left(\frac{p}{p+1}\right)\leq \sum_{m> \sqrt{x}}
\frac{(2\log m)^k}{m^2} \ll \frac{(\log x)^{k}}{\sqrt{x}}\sum_{m>\sqrt{x}}\frac{1}{m^{3/2}}\ll \frac{(\log x)^{k}}{x}\ll x^{-1/2}.$$
Thus, combining this bound with \eqref{diagonalChar} and \eqref{Error2.6} we deduce that the contribution of the squares to the right hand side of \eqref{MomentsMain} is 
\begin{equation}\label{squares}
 \sum_{m=1}^{\infty}
\frac{\Lambda_{k}(m^2)}{m^2}\prod_{p|m}\left(\frac{p}{p+1}\right) +O\left(x^{-1/4}\right).
\end{equation}

To bound the contribution of the non-squares, we use the following simple application of the P\'olya-Vinogradov inequality, which corresponds to Lemma 4.1 of \cite{GrSo} and states that
$$ \sum_{D\in \F} \chi_D(n)\ll x^{1/2}n^{1/4}\log n,$$
if $n$ is not a perfect square. 
Using this bound along with \eqref{BoundPowerLambda}, we deduce that the contribution of the non-squares  to the right hand side of \eqref{MomentsMain} is 
\begin{equation}\label{nonsquares}
 \ll x^{-1/4}\log x\sum_{n\leq x}\frac{\Lambda_{k}(n)}{n}\ll x^{-1/4}(2\log x)^{k+1}\ll x^{-1/6}.
\end{equation}
Furthermore, it follows from Lemma \ref{BoundLD} along with the classical zero free region for $L(s, \chi_D)$ that for $D\in \Fstar$ we have 
\begin{equation}\label{ClassicalBound}
\frac{L'}{L}(1,\chi_D)\ll (\log |D|)^2.
\end{equation}
Therefore, combining this bound with equations \eqref{MomentsMain}, \eqref{squares} and \eqref{nonsquares} we derive
\begin{align*}
\frac{1}{|\F|}\sum_{D\in \Fstar} \left(-\frac{L'}{L}(1,\chi_D)\right)^k
&= \frac{1}{|\F|}\sum_{D\in \Fstar\setminus \E} \left(-\frac{L'}{L}(1,\chi_D)\right)^k +O\left(x^{-1/4}(\log x)^{2k}\right)\\
&=\sum_{m=1}^{\infty}
\frac{\Lambda_{k}(m^2)}{m^2}\prod_{p|m}\left(\frac{p}{p+1}\right)+ O \big(x^{-1/20}\big).
\end{align*}
\end{proof}
We are now ready to prove Theorem \ref{MomentsGamma}.
\begin{proof}[Proof of Theorem \ref{MomentsGamma}]
Note that for any prime $p$ and positive integer $k$ we have 
$$ \ex\left((X(p)^k\right)= \frac{p}{2(p+1)}+ (-1)^k\frac{p}{2(p+1)}.$$ Therefore, by the independence of the random variables $X(p)$ we deduce 
that 
\begin{equation}\label{ortho}
\ex\big(X(n)\big)=\begin{cases} \prod_{p|n} \left(\frac{p}{p+1}\right) & \text{if } n \text{ is a square},\\
0 & \text{otherwise}.\\
\end{cases}
\end{equation}
Hence, we obtain
$$\mathbb{E} \left(\left( \sum_{n=1}^{\infty} \frac{\Lambda(n)X(n)}{n} \right)^{k}\right)=\ex\left( \sum_{n=1}^{\infty} \frac{\Lambda_k(n)X(n)}{n}\right)=\sum_{m=1}^{\infty}
\frac{\Lambda_{k}(m^2)}{m^2}\prod_{p|m}\left(\frac{p}{p+1}\right).$$
Therefore, it follows from Theorem \ref{MomentsEuler} that
\begin{align*}
\frac{1}{|\F|} \sum_{D\in \Fstar} (\g)^k&= \frac{1}{|\F|} \sum_{D\in \Fstar}\left(\gamma+
\frac{L'}{L}(1,\chi_D)\right)^k\\
&= \sum_{j=0}^k \binom{k}{j} \gamma^{k-j}\frac{1}{|\F|} \sum_{D\in \Fstar} \left(\frac{L'}{L}(1,\chi_D)\right)^j\\
&= \sum_{j=0}^k \binom{k}{j} \gamma^{k-j} (-1)^j \ex\left(\left( \sum_{n=1}^{\infty} \frac{\Lambda(n)X(n)}{n} \right)^j\right)+O\left(x^{-1/30}\right)\\
&= \ex\left((\gr)^k\right)+O\left(x^{-1/30}\right).
\end{align*} 
\end{proof}

\section{The Laplace transform of $\g$: proof of Theorem \ref{AsympLaplace}}

In order to obtain Theorem \ref{AsympLaplace} from Theorem \ref{MomentsGamma}, we need a uniform bound for the moments of $\gr$. We prove

\begin{pro}\label{BoundMomRand} There exists a constant $c>0$ such that for all positive integers $k\geq 8$ we have 
$$ \ex\left(\left|\gr\right|^k\right)\leq \big(c\log k\big)^k.$$

\end{pro}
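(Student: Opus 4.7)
The plan is to bound $\ex(|\gr|^k)$ using the elementary inequality $|t|^k \leq (k!/s^k)\,e^{s|t|}$ (valid for all real $t$ and $s>0$). Taking expectations and combining with $e^{s|\gr|}\leq e^{s\gr}+e^{-s\gr}$, this yields
\[
\ex\bigl(|\gr|^k\bigr) \leq \frac{k!}{s^k}\Bigl(\ex(e^{s\gr}) + \ex(e^{-s\gr})\Bigr).
\]
It therefore suffices to establish $\ex(e^{\pm s\gr}) \leq \exp(Cs\log s)$ for some absolute constant $C>0$ and all $s\geq 2$, and then to optimize the choice of $s$.

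To prove the Laplace transform bound, I use the independence of the $X(p)$'s to factor
\[
\ex(e^{s\gr})=e^{s\gamma}\prod_p E_p(s),\qquad E_p(s):=\frac{1}{p+1}+\frac{p}{2(p+1)}\bigl(p^{-s/(p-1)}+p^{s/(p+1)}\bigr).
\]
I would split the product at a cutoff $T\asymp s\log s$. For $p>T$, a Taylor expansion of $\log E_p(s)$ combined with the estimates $|\ex(V_p)|=O((\log p)/p^2)$ and $\ex(V_p^2)=O((\log p)^2/p^2)$ (where $V_p=-(\log p)X(p)/(p-X(p))$) yields $\log E_p(s) = O(s(\log p)/p^2 + s^2(\log p)^2/p^2)$, and partial summation with the prime number theorem gives $\sum_{p>T}\log E_p(s) = O(s^2\log T/T) = O(s)$. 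For $p\leq T$ the term $p^{s/(p+1)}$ dominates, so $\log E_p(s)\leq s(\log p)/(p+1)+O(1)$, and Mertens' theorem gives $\sum_{p\leq T}\log E_p(s) \leq s\log T + O(\pi(T)) = O(s\log s)$. Summing, $\log \ex(e^{s\gr}) = O(s\log s)$; the bound for $\ex(e^{-s\gr})$ is obtained symmetrically by swapping the roles of $p^{s/(p+1)}$ and $p^{s/(p-1)}$.

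Finally, substituting $s = k/\log k$ in the initial inequality and applying Stirling's formula $\log k! = k\log k - k + O(\log k)$, the logarithm of the resulting bound simplifies to $k\log\log k + O(k)$. Exponentiating gives $\ex(|\gr|^k) \leq (c\log k)^k$ for some constant $c>0$ and all $k\geq 8$. The main obstacle is obtaining the correct rate $\exp(Cs\log s)$ for the Laplace transform: a cruder per-prime estimate would yield only $\exp(Ce^s)$, leading after optimization to the much weaker bound $(k/\log k)^k$. The cutoff $T\asymp s\log s$ is the precise balance point between the \emph{exponential} small-prime contribution and the \emph{Gaussian} large-prime contribution; this behavior reflects the double-exponential tail of $\gr$ that is made explicit in Theorem~\ref{ExponentialDecay}.
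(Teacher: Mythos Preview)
Your argument is correct, but it proceeds along a genuinely different route from the paper's proof. The paper works directly with moments: it applies Minkowski's inequality to split $\gr$ at a threshold $y$, bounds the truncated part trivially by $\gamma+\sum_{n\le y}\Lambda(n)/n\ll\log y$, and then controls the tail $\sum_{n>y}\Lambda(n)X(n)/n$ by Cauchy--Schwarz together with an explicit bound on its even moments (using \eqref{ortho} and $\Lambda_{2m,y}(n^2)\le(2\log n)^{2m}$). Choosing $y=k^2$ makes the tail contribute $O(1)$ to the $k$-th root, giving the result.

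Your approach instead passes through the Laplace transform: you establish the cruder form $M(\pm s)=O(s\log s)$ of Proposition~\ref{LaplaceRand} by the same small-prime/large-prime splitting used there, and then convert this to a moment bound via $|t|^k\le (k!/s^k)e^{s|t|}$ with the saddle-point choice $s=k/\log k$. This is entirely valid and arguably more conceptual, since the Laplace transform is the central object of the paper; in effect you are anticipating Section~4. The paper's ordering is the reverse: it proves the moment bound first by elementary means, uses it (in the proof of Theorem~\ref{AsympLaplace}) to justify truncating the Taylor series of $\exp(s\cdot\gr)$, and only afterwards derives the sharp asymptotics for $M(s)$. Your route has the minor cost of reproving a weak form of Proposition~\ref{LaplaceRand} inside Section~3, while the paper's route keeps the two sections logically independent.
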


\begin{proof} Let $y>2$ be a real number to be chosen later. By Minkowski's inequality we have 
\begin{equation}\label{Minkowski}
\begin{aligned}
\ex\left(\left|\gr\right|^k\right)^{1/k}
&\leq \ex\left(\left|\gamma-\sum_{n\leq y}\frac{\Lambda(n)X(n)}{n}\right|^k\right)^{1/k} +\ex\left(\left|\sum_{n>y}\frac{\Lambda(n)X(n)}{n}\right|^k\right)^{1/k}\\
&\leq \gamma+\sum_{n\leq y}\frac{\Lambda(n)}{n}+\ex\left(\left|\sum_{n>y}\frac{\Lambda(n)X(n)}{n}\right|^k\right)^{1/k}.\\
&\ll \log y+\ex\left(\left|\sum_{n>y}\frac{\Lambda(n)X(n)}{n}\right|^k\right)^{1/k}.\\
\end{aligned}
\end{equation}
Furthermore, by the Cauchy-Schwarz inequality we have 
\begin{equation}\label{Cauchy}
\ex\left(\left|\sum_{n>y}\frac{\Lambda(n)X(n)}{n}\right|^k\right) \leq \ex\left(\left(\sum_{n>y}\frac{\Lambda(n)X(n)}{n}\right)^2\right)^{1/2} \ex\left(\left(\sum_{n>y}\frac{\Lambda(n)X(n)}{n}\right)^{2(k-1)}\right)^{1/2}
\end{equation}
Let $$\Lambda_{\ell,y}(n):=\sum_{\substack{n_1,n_2,\dots, n_{\ell}>y\\ n_1n_2\cdots n_{\ell}=n}} \Lambda(n_1)\Lambda(n_2)\cdots \Lambda(n_{\ell}).$$
Then, for every positive integer $m$ we have 
\begin{align*}
\ex\left(\left(\sum_{n>y}\frac{\Lambda(n)X(n)}{n}\right)^{2m}\right)
&= \ex\left(\sum_{n>y^{2m}}\frac{\Lambda_{2m,y}(n)X(n)}{n}\right)\\
&=\sum_{n>y^{m}}\frac{\Lambda_{2m,y}(n^2)}{n^2}\prod_{p|n}\left(\frac{p}{p+1}\right)\\
&\leq 
\sum_{n>y^{m}}\frac{(2\log n)^{2m}}{n^2},
\end{align*}
since $\Lambda_{\ell,y}(n)\leq \Lambda_{\ell}(n)\leq (\log n)^{\ell}$. Moreover, since $(\log n)^{2m}/\sqrt{n}$ is decreasing for $n>e^{4m}$, we deduce that if $y\geq e^4 $ then 
$$ \ex\left(\left(\sum_{n>y}\frac{\Lambda(n)X(n)}{n}\right)^{2m}\right)\leq \frac{(2m\log y)^{2m}}{y^{m/2}}\sum_{n>y^{m/2}}\frac{1}{n^{3/2}}\ll\frac{(2m\log y)^{2m}}{y^{m}}.$$
Thus if $y\geq e^4 $ then by \eqref{Cauchy} we obtain that 
$$ \ex\left(\left|\sum_{n>y}\frac{\Lambda(n)X(n)}{n}\right|^k\right)^{1/k}\ll \frac{k\log y}{\sqrt{y}}.$$
Choosing $y=k^2$ and inserting this estimate in \eqref{Minkowski} completes the proof.
\end{proof}

\begin{proof}[Proof of Theorem \ref{AsympLaplace}] Given $\epsilon>0$, it follows from Corollary \ref{ASBound} that there exists a constant $B_{\epsilon}>0$ such that 
$$ |\g|\leq B_{\epsilon} \log\log x,$$
for all fundamental discriminants $D\in \F$ except for a set $\E$ with $|\E|=O\left(x^{\epsilon}\right).$
Let $N=\lfloor \log x/(50\log\log x)\rfloor$. Then we obtain
\begin{equation}\label{TaylorLaplace}
\frac{1}{|\F|}\sum_{D\in \F\setminus \mathcal{E}(x)}\exp\left(s \cdot \g\right)
= \sum_{k=0}^N \frac{s^k}{k!} \frac{1}{|\F|}\sum_{D\in \F\setminus \mathcal{E}(x)} \left(\g\right)^k+E_2\\
\end{equation}
where
$$ E_2\ll \sum_{k>N} \frac{|s|^k}{k!} (B_{\epsilon}\log\log x)^k\leq \sum_{k>N} \left(\frac{3B_{\epsilon}|s|\log\log x}{N}\right)^k \ll e^{-N} $$
by Stirling's formula, if $|s|\leq C_{\epsilon}\log x/(\log\log x)^2$ for some small constant $C_{\epsilon}>0$. Furthermore, it follows by Theorem \ref{MomentsGamma} and equation \eqref{ClassicalBound} that for all integers $0\leq k\leq N$ we have 
\begin{align*}
\frac{1}{|\F|}\sum_{D\in \F\setminus \mathcal{E}(x)} \left(\g\right)^k&=\frac{1}{|\F|}\sum_{D\in \Fstar} \left(\g\right)^k +O\left(x^{-1+\epsilon}(\log x)^{2k}\right)\\
&=\ex\left(\gr^k\right) +O\left(x^{-1/20}\right).
\end{align*}
Moreover, it follows from Proposition \ref{BoundMomRand} and Stirling's formula that for some positive constant $C$ we have 
$$ \sum_{k>N} \frac{s^k}{k!} \ex\left(\gr^k\right) \ll \sum_{k>N} \left(\frac{C|s|\log k}{k}\right)^k\ll \sum_{k>N} \left(\frac{C|s|\log N}{N}\right)^k\ll e^{-N},$$
if $C_{\epsilon}$ is suitably small.
Finally, inserting these estimates in \eqref{TaylorLaplace},  we derive
\begin{align*}
\frac{1}{|\F|}\sum_{D\in \F\setminus \mathcal{E}(x)}\exp\left(s \cdot \g\right)&=  \sum_{k=0}^N \frac{s^k}{k!} \ex\left(\gr^k\right) +O\left(e^{-N}+ x^{-1/20}e^{|s|}\right)\\
&= \ex\Big(\exp\big( s \cdot \gr\big)\Big)+  O\left(e^{-N}\right),
\end{align*}
as desired.
\end{proof}


\section{The Laplace transform of $\gr$}

\noindent For any $s\in \mathbb{C}$ we define
$$  M(s):= \log\left(\ex\Big(\exp\big(s\cdot\gr\big) \Big)\right).$$
Since the $X(p)$ are independent and 
$ \gr= \gamma-\sum_{p}(\log p)X(p)/(p-X(p))$ we deduce that 
\begin{equation}\label{LaplaceProduct}
M(s)= \gamma s+ \sum_{p}\log h_p(s),
\end{equation}
where
$$h_p(s):= \ex\left(\exp\left(-\frac{s(\log p) X(p)}{p-X(p)}\right)\right).$$
Note that 
\begin{equation}\label{expectation}
h_p(s)= \frac{p}{2(p+1)}\exp\left(\frac{s\log p}{p+1}\right)+ \frac{p}{2(p+1)}\exp\left(\frac{-s\log p}{p-1}\right)+  \frac{1}{p+1}.
\end{equation}

The main purpose of this section is to investigate the asymptotic behavior of $M(r)$ and its derivatives,  where $r$ is a large real number. We establish the following proposition.

\begin{pro}\label{LaplaceRand}
For any real number $r\geq 4$ we have
\begin{equation}\label{LaplaceRand1}
M(r)=r\left(\log r+\log\log r+A_1-1+O\left(\frac{\log\log r}{\log r}\right)\right),
\end{equation}
\begin{equation}\label{LaplaceRand2}
M(-r)=r\left(\log r+\log\log r+A_2-1+O\left(\frac{\log\log r}{\log r}\right)\right),
\end{equation}
\begin{equation}\label{LaplaceRand3}
M'(r)= \log r+\log\log r +A_1+ O\left(\frac{\log\log r}{\log r}\right),
\end{equation}
and
\begin{equation}\label{LaplaceRand4}
M'(-r)= -\log r-\log\log r -A_2+ O\left(\frac{\log\log r}{\log r}\right).
\end{equation}
Moreover, for all real numbers $y, t$ such that $|y|\geq 3$ we have
\begin{equation}\label{LaplaceRand5}
M''(y)\asymp \frac{1}{|y|}, \text{ and } M'''(y+it)\ll \frac{1}{|y|^2}.
\end{equation}
\end{pro}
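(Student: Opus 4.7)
The plan is to analyze $M(s)=\gamma s+\sum_p \log h_p(s)$ via the convenient rewriting
$h_p(r)=\frac{p}{p+1}e^{-v_p}\cosh(u_p)+\frac{1}{p+1}$,
where $u_p:=rp\log p/(p^2-1)\asymp r\log p/p$ and $v_p:=r\log p/(p^2-1)\asymp r\log p/p^2$. The natural cutoff is $u_p\asymp 1$, i.e.\ $p\asymp r\log r$. For small primes $p\leq r\log r$ I would use $\cosh(u_p)=\tfrac12 e^{u_p}(1+e^{-2u_p})$ and factor out the dominant exponential, obtaining
$\log h_p(r)=\log\frac{p}{2(p+1)}+\frac{r\log p}{p+1}+\log\!\left(1+e^{-2u_p}+\frac{2}{p}e^{-r\log p/(p+1)}\right)$,
while for large primes $p>r\log r$ I would Taylor-expand $\log h_p(r)$ around $r=0$, using $h_p(0)=1$, $h_p'(0)=-p\log p/((p+1)^2(p-1))$, and $h_p''(0)=(\log p)^2/p^2+O((\log p)^2/p^3)$, to obtain $\log h_p(r)=O(r^2(\log p)^2/p^2)$ in this range.

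For \eqref{LaplaceRand1}, the dominant terms $r\log r+r\log\log r$ arise from Mertens' theorem applied to
$r\sum_{p\leq r\log r}\frac{\log p}{p+1}=r\log(r\log r)+Mr+O(r/\log r)$.
The constant $A_1-1$ is then assembled from four contributions: (i) the Mertens constant $M$; (ii) the prime-counting piece $\sum_{p\leq r\log r}\log\frac{p}{2(p+1)}=-r\log 2+O(r\log\log r/\log r)$ via PNT; (iii) the boundary and tail sums $\sum_{u_p\geq 1}\log(1+e^{-2u_p}+\cdots)+\sum_{u_p<1}\log h_p(r)$, which after partial summation and the change of variable $t=r\log p/p$ produce $rA_0$, the two pieces matching $\int_0^1\tanh(t)/t\,dt$ and $\int_1^\infty(\tanh(t)-1)/t\,dt$ respectively (using $(\log\cosh)'=\tanh$); and (iv) the $\zeta'(2)/\zeta(2)$ correction coming from the expansion $1/(p+1)=1/p-1/p^2+\cdots$ in the main term, together with the $-v_p$ contribution of the $e^{-v_p}$ factor, where $\sum_p \log p/(p^2-1)=-\zeta'(2)/\zeta(2)$. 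The formula \eqref{LaplaceRand2} for $M(-r)$ is obtained by swapping the two exponentials in $h_p$: the asymmetry between $p+1$ and $p-1$, combined with the sign change of $\gamma s$, shifts $+2\zeta'(2)/\zeta(2)$ to $-2\gamma$, giving $A_2=A_0-2\gamma$.

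The derivative estimates \eqref{LaplaceRand3}, \eqref{LaplaceRand4} follow by the parallel analysis of $M'(r)=\gamma+\sum_p h_p'(r)/h_p(r)$ under the same splitting and termwise differentiation. For \eqref{LaplaceRand5}, I would use that $M''(r)=\sum_p[h_p''(r)/h_p(r)-(h_p'(r)/h_p(r))^2]$ is a sum of variances of tilted random variables and hence nonnegative; only primes $p\asymp r\log r$ contribute essentially, each of order $(\log p/p)^2$, giving $M''(r)\asymp 1/|r|$ by a Mertens-type calculation. For $M'''(y+it)$, one expands $[\log h_p(s)]'''$ as a rational combination of $h_p^{(k)}$ for $k\leq 3$, uses the triangle inequality $|h_p^{(k)}(y+it)|\ll(\log p/p)^k h_p(y)$, pairs this with the matching lower bound $|h_p(y+it)|\gg h_p(y)$ (valid for $|y|\geq 3$ because the dominant exponential $\frac{p}{2(p+1)}e^{y\log p/(p+1)}$ has constant modulus in $t$ and dominates the other terms), and sums termwise to obtain $|M'''(y+it)|\ll 1/y^2$.

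The hardest part will be pinning down the constants $A_1$ and $A_2$ exactly. This requires tracking all secondary terms in Mertens' formula, carefully accounting for the asymmetry $p\pm 1$ in the $X(p)$-distribution (the source of the $\zeta'(2)/\zeta(2)$ and $\gamma$ corrections), and matching the boundary behaviour at $u_p\sim 1$ to the regularized $\tanh$-integrals defining $A_0$.
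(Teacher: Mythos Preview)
Your approach to \eqref{LaplaceRand1}--\eqref{LaplaceRand4} is essentially the paper's, just organized differently: the paper packages the transition via a single piecewise function $f(t)=\log\cosh t - t\cdot\mathbf{1}_{t\ge1}$, applies the Mertens-type formula $\sum_{p\le y}\log p/(p+1)=\log y-\gamma+2\zeta'(2)/\zeta(2)+O(1/\log y)$, and converts $\sum_p f(r\log p/(p+1))$ to $r\int_0^\infty f(u)u^{-2}\,du=r(A_0-1)$ by the substitution $u=r\log t/(t+1)$ (the $-1$ coming from the jump of $f$ at $u=1$). Your bookkeeping of the constant is off, however: piece (iii) actually yields $r(A_0-1+\log 2)$, not $rA_0$ --- integrate by parts and note $\log(1+e^{-2u})=\log 2+\log\cosh u-u$ for $u\ge1$ --- and since the Mertens constant in (i) already carries the full $2\zeta'(2)/\zeta(2)$, your piece (iv) is redundant. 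The four pieces do sum to $A_1-1$, but not with the attributions you state.

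The genuine gap is in your argument for the $M'''$ bound. The lower bound $|h_p(y+it)|\gg h_p(y)$ via ``the dominant exponential dominates the other terms'' is valid only when $|y|\log p/(p+1)$ is bounded below, i.e.\ for $p\ll |y|\log|y|$. For larger primes the two exponentials in $h_p$ have nearly equal modulus $\approx\tfrac12$ and can interfere destructively: indeed $h_p(y+it)\approx\cosh\bigl((y+it)\log p/p\bigr)$ drops to $|\sinh(y\log p/p)|\asymp y\log p/p$ when $t\log p/p\equiv\pi/2\pmod\pi$, so no uniform lower bound of the claimed form exists. The fix is to treat large primes separately by expanding $\log h_p(s)$ directly as a power series in $s\log p/p$ and reading off $|(\log h_p)'''(s)|\ll(\log p/p)^3$, which sums over $p\gg|y|\log|y|$ to $\ll 1/|y|^2$. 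This Taylor argument requires $|s|\log p/p\ll1$, which for unrestricted $t$ is not automatic --- but the paper only ever invokes the $M'''$ bound in the saddle-point analysis with $|t|\le|y|$, where it is. (The paper itself does not write out the proof of \eqref{LaplaceRand5}, saying only that it ``follows along the same lines.'')
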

To prove this result we first need some preliminary lemmas. 
\begin{lem}\label{estimate1}  Let $r\geq 4$ be a real number. Then we have 

\begin{equation}\label{approxlarge}
\log h_p(r)=\begin{cases} 
r\frac{\log p}{p+1} +O(1) & \text{ if } p\leq r^{2/3}\\
\log\cosh\left(\frac{r\log p}{p+1}\right)+O\left(\frac{r\log p}{p^2}\right) & \text{ if } p>r^{2/3}.
\end{cases}
\end{equation}
and 
\begin{equation}\label{approxlarge2}
\log h_p(-r)=\begin{cases} 
r\frac{\log p}{p-1} +O(1) & \text{ if } p\leq r^{2/3}\\
\log\cosh\left(\frac{r\log p}{p-1}\right)+O\left(\frac{r\log p}{p^2}\right) & \text{ if } p>r^{2/3}.
\end{cases}
\end{equation}

\end{lem}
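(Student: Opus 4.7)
My plan is to establish both estimates by a direct case analysis on the size of $p$ relative to $r$, starting from the explicit formula for $h_p(r)$ given in \eqref{expectation}. Set $\alpha := r(\log p)/(p+1)$ and $\beta := r(\log p)/(p-1)$, and note the two elementary identities $\beta - \alpha = 2r\log p/(p^2-1) \ll r\log p/p^2$ and $\log\bigl(p/(p+1)\bigr) \ll 1/p$, which will be used repeatedly.

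For $p \leq r^{2/3}$ one has $\alpha \gg r^{1/3}\log r$, so $e^{\alpha}$ dominates every other contribution to $h_p(r)$. Factoring it out gives
\[
h_p(r) = \frac{p}{2(p+1)}\,e^{\alpha}\Bigl(1 + e^{-\alpha-\beta} + \tfrac{2}{p}e^{-\alpha}\Bigr),
\]
and taking logarithms yields $\log h_p(r) = \alpha + \log\bigl(p/(2(p+1))\bigr) + O(e^{-\alpha}) = \alpha + O(1)$, which is the first case of \eqref{approxlarge}.

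For $p > r^{2/3}$ the key observation is that $\beta - \alpha$ tends to $0$ uniformly in this range, so $e^{-\beta} = e^{-\alpha}\bigl(1 + O(r\log p/p^2)\bigr)$. Summing the two exponentials then produces $2\cosh(\alpha)$ up to an error of $O\bigl(e^{-\alpha}r\log p/p^2\bigr)$, so that
\[
\frac{h_p(r)}{\cosh(\alpha)} = 1 - \frac{1 - \operatorname{sech}(\alpha)}{p+1} + O\!\left(\frac{r\log p}{p^2}\right).
\]
It then suffices to show $\bigl(1 - \operatorname{sech}(\alpha)\bigr)/(p+1) \ll r\log p/p^2$, which I would handle by a sub-case split on the size of $\alpha$. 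When $\alpha \geq 1$ one has $1 - \operatorname{sech}(\alpha) \leq 1$ and $p+1 \leq r\log p$, so the bound reduces to $1/(p+1) \ll r\log p/p^2$; when $\alpha < 1$ one uses $1 - \operatorname{sech}(\alpha) \leq \alpha^2$ together with $r\log p/p \leq 1$ in this subrange, giving $\alpha^2/(p+1) \ll (r\log p)^2/p^3 \ll r\log p/p^2$. Taking logarithms then gives the second case of \eqref{approxlarge}.

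The analogous expansion \eqref{approxlarge2} for $\log h_p(-r)$ follows by the same argument applied to $h_p(-r)$, which differs from $h_p(r)$ only by interchanging the two exponential terms; this swaps the roles of $p+1$ and $p-1$, turning the dominant exponent for small primes into $r\log p/(p-1)$ and the $\cosh$ argument for large primes into $r\log p/(p-1)$. I expect the main obstacle to lie in keeping the error bound uniform across the transition region $\alpha \asymp 1$ (that is, $p \asymp r\log r$) inside the case $p > r^{2/3}$, where neither the small-$\alpha$ Taylor expansion nor the large-$\alpha$ exponential asymptotic of $\cosh$ is cleanly applicable; the sub-case split above is designed precisely to smooth this transition.
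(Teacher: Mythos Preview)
Your proof is correct and follows essentially the same approach as the paper: factor out the dominant exponential for small primes, and compare $h_p(r)$ to $\cosh(\alpha)$ for large primes via $e^{-\beta}=e^{-\alpha}\bigl(1+O(r\log p/p^2)\bigr)$. The only real difference is that where you split into sub-cases $\alpha\geq 1$ and $\alpha<1$ to bound $(1-\operatorname{sech}\alpha)/(p+1)$, the paper uses the single elementary inequality $\cosh(t)-1\ll t\cosh(t)$ (equivalently $1-\operatorname{sech}(t)\ll t$) valid for all $t\geq 0$, which immediately gives $(1-\operatorname{sech}\alpha)/(p+1)\ll \alpha/(p+1)\ll r\log p/p^2$ and dissolves the ``transition region'' you flagged as a potential obstacle.
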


\begin{proof}
We only prove \eqref{approxlarge} since \eqref{approxlarge2} can be obtained similarly.
First, if $p<r^{2/3}$ then 
\begin{equation}\label{Smallprimes}
h_p(r)= \frac{p}{2(p+1)}\exp\left(\frac{r\log p}{p+1}\right)
\left(1+O\left(\exp\left(-r^{1/3}\right)\right)\right),
\end{equation}
from which the desired estimate follows in this case. 

Now, if $p>r^{2/3}$ then
\begin{equation}\label{locallargep}
\begin{aligned}
h_p(r)
&= \frac{p}{(p+1)} \cosh\left(\frac{r\log p}{p+1}\right) \left(1+O\left(\frac{r\log p}{p^2}\right)\right)+\frac{1}{p+1}\\
&= \cosh\left(\frac{r\log p}{p+1}\right) \left(1+O\left(\frac{r\log p}{p^2}\right)\right),
\end{aligned}
\end{equation}
since $ \cosh(t)-1 \ll  t\cosh(t)$, for all $t\geq 0$. This completes the proof.

\end{proof}

\begin{lem}\label{estimatelogarithmicderivative}  Let $r\geq 4$ be a real number. Then we have 

\begin{equation}\label{approxlarge3}
\frac{h'_p(r)}{h_p(r)}=\begin{cases}  \frac{\log p}{p+1} \left(1+O\left(e^{-r^{1/3}}\right)\right)& \text{ if } p\leq r^{2/3}\\
\frac{\log p}{p+1}\tanh\left(r\frac{\log p}{p+1}\right) + O\left(\frac{\log p}{p^2}+\frac{r\log^2p}{p^3}\right) & \text{ if } p>r^{2/3}.
\end{cases}
\end{equation}
and 
\begin{equation}\label{approxlarge4}
\frac{h'_p(-r)}{h_p(-r)}=\begin{cases}  -\frac{\log p}{p-1} \left(1+O\left(e^{-r^{1/3}}\right)\right)& \text{ if } p\leq r^{2/3}\\
-\frac{\log p}{p-1}\tanh\left(r\frac{\log p}{p-1}\right) + O\left(\frac{\log p}{p^2}+\frac{r\log^2p}{p^3}\right) & \text{ if } p>r^{2/3}.
\end{cases}
\end{equation}

\end{lem}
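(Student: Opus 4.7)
The plan is to mimic the proof of Lemma~\ref{estimate1}, now carrying along a parallel approximation for the derivative. Differentiating \eqref{expectation} gives
\[
 h'_p(s) = \frac{p\log p}{2(p+1)^2}\exp\!\left(\frac{s\log p}{p+1}\right) \;-\; \frac{p\log p}{2(p+1)(p-1)}\exp\!\left(-\frac{s\log p}{p-1}\right),
\]
and I would handle the two ranges of $p$ exactly as in the proof of Lemma~\ref{estimate1}.

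For $p\leq r^{2/3}$, one has $r\log p/(p+1)\geq r^{1/3}\log p$, so in both $h_p(r)$ and $h'_p(r)$ the exponential $\exp(r\log p/(p+1))$ dwarfs the second exponential and the additive constant $1/(p+1)$. This yields
\[
 h_p(r) = \frac{p}{2(p+1)}e^{r\log p/(p+1)}\bigl(1+O(e^{-r^{1/3}})\bigr),\quad h'_p(r) = \frac{p\log p}{2(p+1)^2}e^{r\log p/(p+1)}\bigl(1+O(e^{-r^{1/3}})\bigr),
\]
whose ratio is the first line of \eqref{approxlarge3}. For $p>r^{2/3}$, set $\tau:=r\log p/(p+1)$. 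The identity $-r\log p/(p-1)=-\tau-2r\log p/(p^2-1)$ gives $\exp(-r\log p/(p-1))=e^{-\tau}\bigl(1+O(r\log p/p^2)\bigr)$. Substituting into the formulas for $h_p(r)$ and $h'_p(r)$, recombining the two exponentials into $\cosh\tau$ and $\sinh\tau$, and using $\cosh\tau\geq 1$ to absorb the residual $1/(p+1)$, one obtains
\[
 h_p(r) = \cosh\tau\,\bigl(1+O(r\log p/p^2+1/p)\bigr),\quad h'_p(r) = \frac{p\log p}{(p+1)^2}\sinh\tau\,\bigl(1+O(r\log p/p^2)\bigr)+O\!\Bigl(\tfrac{\log p}{p^2}\cosh\tau\Bigr).
\]
Dividing and using $|\tanh\tau|\leq 1$ together with $p/(p+1)^2=1/(p+1)+O(1/p^2)$ gives the second line of \eqref{approxlarge3}: the additive $O(\log p/p^2)$ absorbs the various $1/p$ contributions, while the multiplicative error $O(r\log p/p^2)$ acting on the main term $\tfrac{\log p}{p+1}\tanh\tau$ produces the $O(r\log^2 p/p^3)$ remainder. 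The case $h'_p(-r)/h_p(-r)$ is entirely analogous: now $\exp(r\log p/(p-1))$ is the dominant exponential, so writing $\tau':=r\log p/(p-1)$ and comparing $\exp(-r\log p/(p+1))=e^{-\tau'}(1+O(r\log p/p^2))$ gives the same analysis with $p+1$ replaced by $p-1$ and an overall sign change (reflecting the minus sign in front of the second term of $h'_p$).

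The main obstacle is purely bookkeeping. One must verify that, after combining several multiplicative and additive errors of different shapes, the final remainder fits into the precise form $O(\log p/p^2+r\log^2 p/p^3)$ claimed in the lemma. In particular, the additive $1/(p+1)$ in $h_p$, the discrepancy $p/(p+1)^2-1/(p+1)=-1/(p+1)^2$, and the cross error $\tanh\tau\cdot O(1/p)$ from the denominator all need to be seen to fit into this shape. No genuinely new idea beyond those already present in Lemma~\ref{estimate1} is required.
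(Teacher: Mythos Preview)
Your proposal is correct and follows essentially the same route as the paper's own proof: differentiate \eqref{expectation}, reuse the small-prime estimate \eqref{Smallprimes} for $p\leq r^{2/3}$, and for $p>r^{2/3}$ approximate $h_p'(r)$ by $\frac{\log p}{p+1}\sinh\tau$ (plus the indicated errors) and divide by the estimate \eqref{locallargep} for $h_p(r)$. The paper's write-up is slightly terser, but the decomposition and error bookkeeping are the same as yours.
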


\begin{proof}
We only prove \eqref{approxlarge3} since the proof of \eqref{approxlarge4} is similar. By \eqref{expectation} we have 
$$ h_p'(r)= 
\frac{p\log p}{2(p+1)^2}\exp\left(\frac{r\log p}{p+1}\right)-\frac{p\log p}{2(p^2-1)}\exp\left(\frac{-r\log p}{p-1}\right)
$$
First, for $p<r^{2/3}$ we have by \eqref{Smallprimes}
$$ h_p'(r)= \frac{\log p}{p+1} h_p(r)\left(1+O\left(\exp\left(-r^{1/3}\right)\right)\right).$$
On the other hand, if $p>r^{2/3}$ then  
$$ h_p'(r)= \frac{\log p}{p+1}\left(\sinh\left(\frac{r\log p}{p+1}\right) + O\left(\frac{1}{p}\cosh\left(\frac{r\log p}{p+1}\right)+\frac{r\log p}{p^2}\right)\right).
$$
Therefore, by \eqref{locallargep} we obtain
$$ 
\frac{h_p'(r)}{h_p(r)}= \frac{\log p}{p+1}\tanh\left(\frac{r\log p}{p+1}\right)+ O\left(\frac{\log p}{p^2}+\frac{r\log^2p}{p^3}\right).
$$

\end{proof}

\begin{lem}\label{sumprimes}
We have 
\begin{equation}\label{sumprimes1}
\sum_{p\leq y} \frac{\log p}{p-1}= \log y-\gamma+O\left(\frac{1}{\log y}\right),
\end{equation}
and 
\begin{equation}\label{sumprimes2}
\sum_{p\leq y} \frac{\log p}{p+1}= \log y - \gamma +2\frac{\zeta'(2)}{\zeta(2)}+ O\left(\frac{1}{\log y}\right).
\end{equation}
\end{lem}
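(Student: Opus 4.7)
The plan is to reduce both identities to the standard estimate
\[
\sum_{n\leq y}\frac{\Lambda(n)}{n} = \log y - \gamma + O\left(\frac{1}{\log y}\right),
\]
which follows from the prime number theorem via partial summation against $\psi(y) = y + O(y/\log^A y)$, or equivalently from the Laurent expansion $-\zeta'(s)/\zeta(s) = (s-1)^{-1} - \gamma + O(s-1)$ near $s=1$. Throughout, any tail of the form $\sum_{p>z}(\log p)/p^a$ with $a\geq 2$ is $O(1/z)$ by a crude partial summation, so such a tail at $z = y^{1/k}$ for $k\geq 2$ is $O(1/\sqrt y) = o(1/\log y)$ and can be absorbed into the error.

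For \eqref{sumprimes1}, I would begin with the partial-fraction decomposition $\frac{1}{p-1} = \frac{1}{p} + \frac{1}{p(p-1)}$ to write
\[
\sum_{p\leq y}\frac{\log p}{p-1} = \sum_{p\leq y}\frac{\log p}{p} + \sum_{p\leq y}\frac{\log p}{p(p-1)}.
\]
Separating the prime powers $p^k$ with $k\geq 2$ from the sum $\sum_{n\leq y}\Lambda(n)/n$ yields
\[
\sum_{p\leq y}\frac{\log p}{p} = \sum_{n\leq y}\frac{\Lambda(n)}{n} - \sum_{k\geq 2}\sum_{p^k\leq y}\frac{\log p}{p^k},
\]
and the prime-power contribution, after completing the inner series for each $k$, equals the absolutely convergent constant $\sum_p (\log p)/(p(p-1))$ up to a tail error $O(1/\sqrt y)$. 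Substituting back, this constant cancels exactly against the second sum in the decomposition, producing \eqref{sumprimes1}.

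For \eqref{sumprimes2}, I would exploit the identity $\frac{1}{p+1} = \frac{1}{p-1} - \frac{2}{p^2-1}$, which reduces the claim to evaluating $\sum_p (\log p)/(p^2-1)$. This is the step where the zeta function enters: expanding $(p^2-1)^{-1} = \sum_{k\geq 1} p^{-2k}$ and comparing with $-\zeta'(s)/\zeta(s) = \sum_n \Lambda(n)/n^s$ at $s=2$ gives
\[
\sum_p \frac{\log p}{p^2-1} = \sum_p\sum_{k\geq 1}\frac{\log p}{p^{2k}} = \sum_n \frac{\Lambda(n)}{n^2} = -\frac{\zeta'(2)}{\zeta(2)},
\]
and the corresponding tail for $p > y$ is $O(1/y)$. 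Combined with \eqref{sumprimes1}, this yields \eqref{sumprimes2}. The entire argument is algebraic manipulation of absolutely convergent series layered on a single analytic input (PNT-strength control on $\sum_{n\leq y}\Lambda(n)/n$); there is no serious obstacle, only the bookkeeping task of verifying that the various remainder terms really do collapse into the claimed $O(1/\log y)$ error.
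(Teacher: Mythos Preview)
Your argument is correct and matches the paper's proof: the paper reduces \eqref{sumprimes1} to the classical estimate $\sum_{n\leq y}\Lambda(n)/n=\log y-\gamma+O(1/\log y)$ via the geometric expansion $\frac{1}{p-1}=\sum_{a\geq 1}p^{-a}$ (which is your decomposition and cancellation step done in one line), and derives \eqref{sumprimes2} from \eqref{sumprimes1} using the same identity $\frac{1}{p+1}=\frac{1}{p-1}-\frac{2}{p^2-1}$ and the same evaluation $\sum_p \frac{\log p}{p^2-1}=-\zeta'(2)/\zeta(2)$.
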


\begin{proof}
We have 
$$\sum_{p\leq y} \frac{\log p}{p-1}= \sum_{p\leq y} \log p\sum_{a=1}^{\infty}\frac{1}{p^a}=\sum_{n\leq y} \frac{\Lambda(n)}{n}+ O\big(y^{-1/2}\big).$$
The first assertion follows from the classical estimate 
\begin{equation}\label{LambdaEstimate}
\sum_{n\leq y} \frac{\Lambda(n)}{n}=\log y-\gamma +O\left(\frac{1}{\log y}\right).
\end{equation}
Moreover, the second assertion follows from the first upon noting that
$$
 \sum_{p\leq y} \frac{\log p}{p+1}
= \sum_{p\leq y} \frac{\log p}{p-1}-2\sum_{p\leq y}\frac{\log p}{p^2-1}= \sum_{p\leq y} \frac{\log p}{p-1} +2\frac{\zeta'(2)}{\zeta(2)}+O\left(\frac{1}{y}\right).
$$

\end{proof}

Let 
$$ f(t):= \begin{cases} \log \cosh(t) & \text{ if } 0\leq t <1 \\
 \log \cosh(t)- t  & \text{ if } t \geq 1.\end{cases}
$$
Then we prove
\begin{lem}\label{logcosh}  $f$ is bounded on $[0,\infty)$ and $f(t)=t^2/2+O(t^4)$ if $0\leq t <1.$ Moreover we have 
\begin{equation}\label{asympldcosh}
 f'(t)=\begin{cases}  t +O(t^2) & \text{ if } 0< t<1 \\
  O(e^{-2t}) & \text{ if } t > 1.\end{cases}
\end{equation}
\end{lem}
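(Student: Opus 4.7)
The plan is entirely elementary: write $f$ explicitly in each regime, expand, and read off the stated bounds.

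First I would handle the global boundedness of $f$. For $0\le t<1$, $f(t)=\log\cosh(t)$ is continuous on a bounded interval and hence bounded. For $t\ge 1$, I rewrite
\[
\log\cosh(t)=\log\frac{e^{t}+e^{-t}}{2}=t-\log 2+\log(1+e^{-2t}),
\]
so $f(t)=-\log 2+\log(1+e^{-2t})$, which visibly lies in $(-\log 2,\log(3/2)-\log 2+\log 2)$; in particular $f$ is bounded on $[1,\infty)$, and combined with the previous bound we get boundedness on $[0,\infty)$.

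For the Taylor expansion near $0$, I would plug the power series $\cosh(t)=1+t^{2}/2+t^{4}/24+O(t^{6})$ into $\log(1+u)=u-u^{2}/2+O(u^{3})$ with $u=t^{2}/2+t^{4}/24+O(t^{6})$, obtaining
\[
\log\cosh(t)=\frac{t^{2}}{2}+\frac{t^{4}}{24}-\frac{1}{2}\left(\frac{t^{2}}{2}\right)^{2}+O(t^{6})=\frac{t^{2}}{2}-\frac{t^{4}}{12}+O(t^{6}),
\]
which gives $f(t)=t^{2}/2+O(t^{4})$ on $[0,1)$ as claimed.

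Finally, for the derivative estimates, I would use that $f'(t)=\tanh(t)$ on $(0,1)$ and $f'(t)=\tanh(t)-1$ on $(1,\infty)$. On $(0,1)$ the Taylor expansion $\tanh(t)=t-t^{3}/3+O(t^{5})$ yields $f'(t)=t+O(t^{3})=t+O(t^{2})$ since $|t|<1$. On $(1,\infty)$ I write
\[
\tanh(t)-1=\frac{e^{t}-e^{-t}}{e^{t}+e^{-t}}-1=\frac{-2e^{-t}}{e^{t}+e^{-t}}=\frac{-2}{e^{2t}+1}=O(e^{-2t}),
\]
which gives the remaining bound. There is no real obstacle here; the only mild subtlety is that $f$ is deliberately defined with the shift $-t$ past $t=1$ so that $f'$ is small at infinity (this is what makes the integral defining $A_{0}$ in Theorem \ref{ExponentialDecay} converge), and I would simply verify continuity/matching where relevant is not needed since the lemma only asserts boundedness, not differentiability across $t=1$.
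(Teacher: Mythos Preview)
Your proof is correct and follows essentially the same approach as the paper: boundedness from the inequality $e^{t}/2\le \cosh t\le e^{t}$ (which you phrase via the explicit identity $\log\cosh t=t-\log 2+\log(1+e^{-2t})$), the Taylor expansion of $\cosh$ near $0$, and the formulas $f'(t)=\tanh t$ and $f'(t)=\tanh t-1$ in the two regimes. Your version is simply more explicit in the computations; one small stylistic point is that ``continuous on $[0,1)$ hence bounded'' tacitly uses that $\log\cosh$ extends continuously to the compact interval $[0,1]$.
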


\begin{proof}
Since $e^t/2\leq \cosh(t)\leq e^t$, it follows that $f$ is bounded on $[0,\infty)$. Now, for  $t\in [0,1)$ we have  $\cosh(t)=1+t^2/2+O(t^4)$ and hence $f(t)=t^2/2+O(t^4)$. 

Moreover, if $0< t< 1$ then $f'(t)=\tanh(t)=t+O(t^2)$. Now, if $t> 1$ then 
$$f'(t)=\tanh(t)-1= \frac{e^t-e^{-t}}{e^t+e^{-t}}=O(e^{-2t}).$$
\end{proof}

We are now ready to prove Proposition \ref{LaplaceRand}. 
\begin{proof}[Proof of Proposition \ref{LaplaceRand}]
We only prove \eqref{LaplaceRand1} and \eqref{LaplaceRand3}, since \eqref{LaplaceRand2}, \eqref{LaplaceRand4} and \eqref{LaplaceRand5} follow along the same lines.
By Lemma \ref{estimate1} and the prime number theorem we obtain
$$M(r)=\gamma r+\sum_{p\leq r^{2/3}}\frac{r\log p}{p+1}+\sum_{p>r^{2/3}}\log\cosh\left(\frac{r\log p}{p+1}\right)+ O\left(r^{2/3}\right).$$
Let $R$ be the unique solution to  $r\log R=R+1$. Then we have 
$$R= r\log r\left(1+O\left(\frac{\log\log r}{\log r}\right)\right).$$
Since $(\log t)/(t+1)$ is decreasing for $t\geq 4$ we deduce 
$$M(r)=\gamma r+\sum_{p\leq R}\frac{r\log p}{p+1}+\sum_{p>r^{2/3}}f\left(\frac{r\log p}{p+1}\right)+ O\left(r^{2/3}\right).$$
Moreover, by \eqref{sumprimes2}  we have 
$$ \sum_{p\leq R}\frac{\log p}{p+1}= \log R-\gamma+ 2\frac{\zeta'(2)}{\zeta(2)}+ O\left(\frac{1}{\log R}\right)= \log r+\log\log r -\gamma+ 2\frac{\zeta'(2)}{\zeta(2)}+ O\left(\frac{\log\log r}{\log r}\right).$$
Now, by Lemma \ref{logcosh} and the prime number theorem in the form $\pi(t)-\text{Li}(t)\ll t/(\log t)^3$ we derive
\begin{equation}\label{sumintegral}
\begin{aligned}
\sum_{p>r^{2/3}}f\left(\frac{r\log p}{p+1}\right) &=\int_{r^{2/3}}^{\infty} f\left(\frac{r\log t}{t+1}\right)d\pi(t)\\
&= \int_{r^{2/3}}^{\infty} f\left(\frac{r\log t}{t+1}\right)\frac{dt}{\log t} +E_3,
\end{aligned}
\end{equation}
where 
$$E_3\ll r^{2/3} + r\int_{r^{2/3}}^{\infty} \left|f'\left(\frac{r\log t}{t+1}\right)\right|\frac{1}{t(\log t)^2}dt\ll \frac{r}{\log r},$$
since $f'(t)$ is bounded by Lemma \ref{logcosh}. 
To evaluate the main term on the right hand side of \eqref{sumintegral}  we make the change of variables $u= r(\log t)/(t+1).$
Since $t\geq r^{2/3}$ we obtain that
$$
 du= r \left(\frac{1}{t(t+1)}-\frac{\log t}{(t+1)^2}\right)dt
= -r\frac{(\log t)dt}{(t+1)^2}\left(1+O\left(\frac{1}{\log r}\right)\right)
= -\frac{u^2}{r}\frac{dt}{\log t}\left(1+O\left(\frac{1}{\log r}\right)\right).
$$
Putting $r_1= r(\log(r^{2/3}))/(r^{2/3}+1)$, we deduce by Lemma \ref{logcosh} that
\begin{equation}\label{sumintegral2}
 \sum_{p>r^{2/3}}f\left(\frac{r\log p}{p+1}\right) = r \int_0^{r_1} \frac{f(u)}{u^2}du + O\left(\frac{r}{\log r}\right) =  r \int_0^{\infty} \frac{f(u)}{u^2}du + O\left(\frac{r}{\log r}\right).
\end{equation}
Moreover, by a simple integration by parts we have
$$ \int_0^{\infty}\frac{f(u)}{u^2}du=\int_0^{\infty}\frac{f'(u)}{u}du-1.$$
Collecting the above estimates yields \eqref{LaplaceRand1}.

Now, we prove \eqref{LaplaceRand3}. First, note that 
$$ M'(r)= \gamma+\sum_{p} \frac{h_p'(r)}{h_p(r)}.$$
Using Lemma \ref{estimatelogarithmicderivative} we obtain
\begin{align*}
M'(r)&= \gamma+ \sum_{p<r^{2/3}}\frac{\log p}{p+1}+\sum_{p>r^{2/3}}\frac{\log p}{p+1}\tanh\left(\frac{r\log p}{p+1}\right)+ O\left( r^{-1/3}\log r\right)\\
& = \gamma+ \sum_{p<R}\frac{\log p}{p+1}+\sum_{p>r^{2/3}}\frac{\log p}{p+1}f'\left(\frac{r\log p}{p+1}\right)+ O\left( r^{-1/3}\log r\right)\\
&= \log r+\log\log r+ 2\frac{\zeta'(2)}{\zeta(2)}+\sum_{p>r^{2/3}}\frac{\log p}{p+1}f'\left(\frac{r\log p}{p+1}\right)+ O\left( \frac{\log\log r}{\log r}\right).
\end{align*}
Finally, using the prime number theorem and partial integration as in \eqref{sumintegral2}, one can deduce that 
$$
\sum_{p>r^{2/3}}\frac{\log p}{p-1}f'\left(\frac{r\log p}{p-1}\right)=\int_0^{\infty}\frac{f'(u)}{u}du+O\left(\frac{1}{\log r}\right).
$$

\end{proof}


\section{The distribution function of $\gr$: proof of Theorem \ref{ExponentialDecay}}
To shorten our notation, we define $\lapr(s):= \ex\left(\exp\left(s\cdot \gr\right)\right)$. Let $\phi(y)=1$ if $y>1$ and equals $0$ otherwise. 
To relate the distribution function of $\gr$ (or that of $\g$) to its Laplace transform, we use the following smooth analogue of Perron's formula, which is a slight variation of a formula of Granville and Soundararajan (see \cite{GrSo}).

\begin{lem}\label{SmoothPerron}
Let $\lambda>0$ be a real number and $N$ be a positive integer. For any $c>0$ we have for $y>0$
$$
0\leq \frac{1}{2\pi i}\int_{c-i\infty}^{c+i\infty} y^s \left(\frac{e^{\lambda s}-1}{\lambda s}\right)^N \frac{ds}{s} -\phi(y)\leq 
\frac{1}{2\pi i}\int_{c-i\infty}^{c+i\infty} y^s \left(\frac{e^{\lambda s}-1}{\lambda s}\right)^N \frac{1-e^{-\lambda N s}}{s}ds.
$$
\end{lem}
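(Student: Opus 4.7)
The plan is to interpret the factor $((e^{\lambda s}-1)/(\lambda s))^N$ as the Laplace transform of a probability density. Concretely, starting from $(e^{\lambda s}-1)/(\lambda s)=\int_0^\lambda e^{su}\,du/\lambda$, one obtains
$$\left(\frac{e^{\lambda s}-1}{\lambda s}\right)^N = \int_0^{N\lambda} e^{su}\, f_N(u)\, du,$$
where $f_N$ is the density of $U_1+\cdots+U_N$ with the $U_i$ independent and $\mathrm{Uniform}[0,\lambda]$ (equivalently, the $N$-fold convolution of $\lambda^{-1}\mathbf{1}_{[0,\lambda]}$). This converts the weight on the left into an exponential average in $u$, which pairs cleanly with the classical Perron step.

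Next I would substitute this representation into the contour integral and interchange the order of integration; the swap is legitimate by absolute convergence, since on $\re(s)=c$ the Laplace transform is $O(|s|^{-N})$ and the extra $1/s$ improves this to $O(|s|^{-N-1})$. Applying the standard Perron identity $\frac{1}{2\pi i}\int_{c-i\infty}^{c+i\infty} z^s\, ds/s = \phi_0(z)$ (with $\phi_0(z)=1$ for $z>1$, $\phi_0(z)=0$ for $z<1$) to $z=ye^u$ gives
$$\frac{1}{2\pi i}\int_{c-i\infty}^{c+i\infty} y^s \left(\frac{e^{\lambda s}-1}{\lambda s}\right)^N \frac{ds}{s} = \int_0^{N\lambda} \phi_0(ye^u)\, f_N(u)\, du,$$
exhibiting the contour integral as a smoothed overestimate of $\phi(y)=\phi_0(y)$: each shift $u\geq 0$ pushes $y$ upward, so $\phi_0(ye^u)\geq \phi_0(y)$. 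The lower bound of the lemma follows immediately, since $f_N$ is a probability density supported on $[0,N\lambda]$ (when $y>1$ the integrand is identically $1$ so both sides equal $1$; when $y\leq 1$ one has $\phi(y)=0$ and the integrand is nonnegative).

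For the upper bound I would apply the same trick to the subtracted term on the right-hand side of the lemma: the factor $e^{-\lambda N s}$ merges with $y^s$ to give $(ye^{-\lambda N})^s$, producing the representation $\int_0^{N\lambda}\phi_0(ye^{u-N\lambda})\, f_N(u)\, du$. The claimed inequality then reduces to $\int_0^{N\lambda}\phi_0(ye^{u-N\lambda})\, f_N(u)\, du \leq \phi(y)$, and a short case split finishes the job: when $y\leq 1$ the argument satisfies $ye^{u-N\lambda}\leq 1$ throughout $u\in[0,N\lambda]$, so the integrand vanishes and matches $\phi(y)=0$; when $y>1$, the integral is bounded by $\int f_N = 1 = \phi(y)$. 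In my view the only real conceptual obstacle is recognising the probabilistic interpretation of $((e^{\lambda s}-1)/(\lambda s))^N$ in the first place; once $f_N$ is on the table the whole proof collapses to a Fubini interchange and an elementary two-case comparison, with no analytic subtleties beyond the measure-zero jump of $\phi_0$ at its discontinuity.
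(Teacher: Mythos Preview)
Your proof is correct and is essentially the same as the paper's: the paper expands $\bigl((e^{\lambda s}-1)/(\lambda s)\bigr)^N$ as the $N$-fold integral $\lambda^{-N}\int_0^\lambda\cdots\int_0^\lambda e^{s(t_1+\cdots+t_N)}\,dt_1\cdots dt_N$, swaps with the contour integral, and applies Perron to $ye^{t_1+\cdots+t_N}$, obtaining the same trichotomy and the same sandwich $J(y)\le\phi(y)\le I(y)$. Your version simply pushes forward to the single variable $u=t_1+\cdots+t_N$ via the convolution density $f_N$, which is a cosmetic repackaging rather than a different argument.
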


\begin{proof}
For any $y>0$ we have 
$$\frac{1}{2\pi i}\int_{c-i\infty}^{c+i\infty} y^s \left(\frac{e^{\lambda s}-1}{\lambda s}\right)^N\frac{ds}{s}
= \frac{1}{\lambda^N}\int_{0}^{\lambda}\cdots \int_0^{\lambda} \frac{1}{2\pi i} \int_{c-i\infty}^{c+i\infty}
\left(ye^{t_1+ \cdots+ t_N}\right)^s\frac{ds}{s} dt_1\cdots dt_N
$$
so that by Perron's formula we obtain
$$ 
\frac{1}{2\pi i}\int_{c-i\infty}^{c+i\infty} y^s \left(\frac{e^{\lambda s}-1}{\lambda s}\right)^N\frac{ds}{s}
= \begin{cases} = 1 & \text{ if } y\geq 1, \\ \in [0,1] & \text{ if } e^{-\lambda N } \leq y < 1,\\
 =0 & \text{ if } 0<y< e^{-\lambda N }. \end{cases}
$$
Therefore we deduce that 
\begin{equation}\label{indicator}
 \frac{1}{2\pi i}\int_{c-i\infty}^{c+i\infty} y^s e^{-\lambda N s} \left(\frac{e^{\lambda s}-1}{\lambda s}\right)^N \frac{ds}{s} \leq \phi(y)\leq \frac{1}{2\pi i}\int_{c-i\infty}^{c+i\infty} y^s \left(\frac{e^{\lambda s}-1}{\lambda s}\right)^N \frac{ds}{s}
\end{equation}
which implies the result.

\end{proof}
Let $\tau$ be a real number and consider the equation $M'(r)=\tau$ (recall that $M(r)=\log\lapr(r)$). By Proposition \ref{LaplaceRand} it follows that $\lim_{r\to\infty} M'(r)=\infty$ and $\lim_{r\to-\infty} M'(r)=-\infty$. Moreover, a simple calculation shows that $h_p''(r)h_p(r)>(h_p'(r))^2$ for all primes $p$, and hence that $M''(r)>0$. Thus, it follows that the equation $M'(r)=\tau$ has a unique solution $\kappa$.
Using a carefull saddle point analysis we obtain an asymptotic formula 
for $\pr(\gr>\tau)$ in terms of the Laplace transform of $\gr$ evaluated at the saddle point $\kappa$.

\begin{thm}\label{SaddlePoint}
Let $\tau$ be large and $\kappa$ denote the unique solution to $M'(r)=\tau$. Then,  we have
$$\pr(\gr>\tau)= \frac{\lapr(\kappa)e^{-\tau \kappa}}{\kappa \sqrt{2\pi M''(\kappa)}}\left(1+O\left(\frac{1}{ \sqrt{\kappa}} \right)\right).$$
Similarly,  if $\widetilde{\kappa}$ is the unique solution to $M'(-r)=-\tau$ then 
$$ \pr(\gr<-\tau)=  \frac{\lapr(-\widetilde{\kappa})e^{-\tau \widetilde{\kappa}}}{\widetilde{\kappa} \sqrt{2\pi M''(-\widetilde{\kappa})}}\left(1+O\left(\frac{1}{ \sqrt{\widetilde{\kappa}}} \right)\right).$$

\end{thm}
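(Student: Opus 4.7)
\textbf{Proof proposal for Theorem \ref{SaddlePoint}.}

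The plan is a classical saddle-point analysis of the Mellin-type integral produced by Perron's formula. Starting from $\pr(\gr>\tau)=\ex[\phi(e^{\gr-\tau})]$, I would apply Lemma \ref{SmoothPerron} with parameters $\lambda, N$ to be chosen, take expectations through the absolutely convergent contour integral by Fubini, and obtain the sandwich $J-E\le \pr(\gr>\tau)\le J$, where
\begin{equation*}
J \;=\; \frac{1}{2\pi i}\int_{\kappa-i\infty}^{\kappa+i\infty}\lapr(s)\,e^{-\tau s}\left(\frac{e^{\lambda s}-1}{\lambda s}\right)^{\!N}\!\frac{ds}{s}
\end{equation*}
and $E$ is the same integral with the additional factor $1-e^{-\lambda N s}$. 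I would take $\lambda N\ll \kappa^{-3/2}$ with $N$ large enough to supply the polynomial decay $(\lambda|t|)^{-N}$ needed in the tails.

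To evaluate $J$, set $s=\kappa+it$ and write $\lapr(s)=e^{M(s)}$. The defining property $M'(\kappa)=\tau$ of the saddle together with the bound $M'''(\kappa+it)\ll \kappa^{-2}$ from \eqref{LaplaceRand5} gives the Taylor expansion
\begin{equation*}
M(\kappa+it)-\tau(\kappa+it) \;=\; M(\kappa)-\tau\kappa - \tfrac12 t^{2}M''(\kappa) + R(t), \qquad R(t)\ll |t|^{3}\kappa^{-2}.
\end{equation*}
In the inner region $|t|\le T:=\kappa^{1/2}\log\kappa$, the approximations $R(t)\approx 0$, $(\kappa+it)^{-1}\approx \kappa^{-1}$ and smoothing factor $\approx 1$ are all justified. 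Extending the resulting Gaussian to the full real line yields
\begin{equation*}
J \;=\; \frac{\lapr(\kappa)e^{-\tau\kappa}}{2\pi\kappa}\int_{-\infty}^{\infty}e^{-t^{2}M''(\kappa)/2}\,dt\cdot(1+\text{errors}) \;=\; \frac{\lapr(\kappa)e^{-\tau\kappa}}{\kappa\sqrt{2\pi M''(\kappa)}}(1+\text{errors}),
\end{equation*}
which is the claimed main term.

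The error analysis splits into four pieces. (i) Since $M''(\kappa)\asymp \kappa^{-1}$, the cubic remainder contributes $\int|R(t)|e^{-t^{2}M''(\kappa)/2}\,dt \ll \kappa^{-2}\cdot\kappa^{2}=1$, which is $O(\kappa^{-1/2})$ relative to the main Gaussian integral $\asymp\sqrt{\kappa}$. (ii) Replacing $(\kappa+it)^{-1}$ by $\kappa^{-1}$ produces an odd-$t$ correction of size $O(|t|/\kappa^{2})$ integrating to $O(1/\kappa)$, again $O(\kappa^{-1/2})$ relative. (iii) The outer tail $|t|>T$ is controlled by the trivial bound $|\lapr(\kappa+it)|\le\lapr(\kappa)$ together with the $(\lambda|t|)^{-N}$ decay of the smoothing factor. (iv) For $E$ I would reapply the saddle analysis at the shifted saddle $\kappa_{u}$ solving $M'(\kappa_{u})=\tau+u$, $0\le u\le \lambda N$; since $\tfrac{d}{du}(M(\kappa_{u})-(\tau+u)\kappa_{u})=-\kappa_{u}\le -\kappa$, the exponent contributes $e^{-\kappa u}$ and hence $E\ll J\cdot\lambda N \ll J\kappa^{-3/2}$.

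The estimate for $\pr(\gr<-\tau)$ follows from the mirror argument on the line $\re(s)=-\widetilde\kappa$, using \eqref{LaplaceRand2}, \eqref{LaplaceRand4} in place of \eqref{LaplaceRand1}, \eqref{LaplaceRand3}. The main technical obstacle is the simultaneous calibration of $\lambda$, $N$ and the truncation $T$: they must be tuned so that each of the four error sources above lands at $O(\kappa^{-1/2})$ relative to the main term. None of the individual pieces is deep, but fitting them together within the target threshold is the bookkeeping heart of the argument.
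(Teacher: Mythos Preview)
Your overall strategy is the same as the paper's, but step~(iii) cannot be made to work with the parameters you propose, and this is not a bookkeeping issue but a genuine missing ingredient. For the smoothing factor to provide decay on the tail $|t|>T$ you need $\lambda|t|$ to be bounded below by a constant $c>3$ there, i.e.\ $\lambda T\ge c$. On the other hand, your step~(iv) forces $\lambda N\ll \kappa^{-3/2}$, and the Taylor expansion with the cubic bound $R(t)\ll |t|^3/\kappa^2$ from \eqref{LaplaceRand5} is only usable for $T\le \kappa$ (indeed you take $T=\kappa^{1/2}\log\kappa$). Combining these gives
\[
\lambda T \;\le\; \lambda N\cdot \frac{T}{N}\;\ll\; \kappa^{-3/2}\cdot\kappa \;=\; \kappa^{-1/2},
\]
so $\lambda T\to 0$ and the factor $(3/(\lambda T))^N$ explodes rather than decays. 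No choice of $N$, $\lambda$, $T$ satisfies (iii) and (iv) simultaneously: the trivial bound $|\lapr(\kappa+it)|\le \lapr(\kappa)$ is simply not strong enough to close the argument.

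What the paper supplies here is Lemma~\ref{DecayLaplace}, an \emph{a priori} bound
\[
|\lapr(\kappa+it)|\le \exp\!\left(-\frac{|t|}{4\log|t|}\right)\lapr(\kappa)\qquad (|t|\ge \kappa),
\]
proved by exploiting the product structure $\lapr(s)=e^{\gamma s}\prod_p h_p(s)$ and the oscillation of $h_p(\kappa+it)$ for primes $p$ somewhat larger than $|t|$. With this exponential (rather than polynomial) decay in hand, the paper can take $N=1$ and $\lambda=\kappa^{-2}$: the tail $|t|>\kappa$ is killed by Lemma~\ref{DecayLaplace}, the inner integral $|t|\le\kappa$ is handled exactly as in your steps (i)--(ii), and the direct bound $|(e^{\lambda s}-1)(1-e^{-\lambda s})|\ll \lambda^2|s|^2$ on $|t|\le\kappa$ gives $E\ll \lambda\kappa\,\lapr(\kappa)e^{-\tau\kappa}$, which is acceptable. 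Once you insert such a decay lemma, your outline goes through essentially as written; without it, the tail cannot be controlled.
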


Before proving this theorem, we need to show that $\lapr(r+it)$ is rapidly decreasing in $t$. 

\begin{lem}\label{DecayLaplace} Let $s=r+it \in \mathbb{C}$ where $|r|$ is large. Then, in the range $|t|\geq |r|$ we have 

$$|\lapr(s)|\leq \exp\left(- \frac{|t|}{4\log|t|} \right) \lapr(r).$$

\end{lem}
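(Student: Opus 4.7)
The plan is to exploit the product formula $\lapr(s) = e^{\gamma s}\prod_p h_p(s)$ of equation \eqref{LaplaceProduct} and estimate each factor separately. Passing to absolute values and dividing by $\lapr(r)$, the lemma is equivalent to showing
$$ \sum_p \bigl(\log h_p(r) - \log|h_p(r+it)|\bigr) \;\geq\; \frac{|t|}{4\log|t|}. $$
Writing $h_p(r+it) = A_p e^{i\alpha_p} + B_p e^{-i\beta_p} + C_p$ with $A_p = \tfrac{p}{2(p+1)}e^{r\log p/(p+1)}$, $B_p = \tfrac{p}{2(p+1)}e^{-r\log p/(p-1)}$, $C_p = 1/(p+1)$, and real phases $\alpha_p = t\log p/(p+1)$, $\beta_p = t\log p/(p-1)$, a direct expansion gives
$$h_p(r)^2 - |h_p(r+it)|^2 = 2A_pB_p(1-\cos(\alpha_p+\beta_p)) + 2A_pC_p(1-\cos\alpha_p) + 2B_pC_p(1-\cos\beta_p),$$
which is non-negative term by term. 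In particular every summand in the displayed sum is $\geq 0$, and it suffices to extract a useful lower bound from a well-chosen set of primes.

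I will focus on the dyadic window $P = \{p \text{ prime}: |t|\log|t| \leq p \leq 2|t|\log|t|\}$. Under the assumption $|r|\leq|t|$, for $p \in P$ the exponent $|r|\log p/p$ is $O(1/\log|t|)$, so $A_p, B_p$ lie in $[(2e)^{-1}+o(1),\, e/2+o(1)]$, $C_p = O(1/p)$, and consequently $h_p(r) \asymp 1$. Moreover the combined phase $\alpha_p + \beta_p = 2tp\log p/(p^2-1)$ decreases from $\approx 2\,\text{sgn}(t)$ at $p=|t|\log|t|$ to $\approx \text{sgn}(t)$ at $p = 2|t|\log|t|$, so $|\alpha_p+\beta_p|$ stays in a compact sub-interval of $(0,\pi)$ across $P$ and hence $1 - \cos(\alpha_p+\beta_p) \geq c_0$ uniformly, with $c_0>0$ absolute. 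Combining these estimates gives
$$\delta_p := 1 - \frac{|h_p(r+it)|^2}{h_p(r)^2} \;\geq\; \frac{2A_pB_p(1-\cos(\alpha_p+\beta_p))}{h_p(r)^2} \;\geq\; c_1 \qquad (p \in P),$$
for some absolute $c_1>0$.

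Using the elementary inequality $-\log(1-x) \geq x$ for $x\in[0,1)$, this yields $\log h_p(r) - \log|h_p(r+it)| = -\tfrac{1}{2}\log(1-\delta_p) \geq \delta_p/2 \geq c_1/2$ for each $p \in P$. Summing and invoking the prime number theorem, $|P| = \pi(2|t|\log|t|) - \pi(|t|\log|t|) = (1+o(1))|t|$, so the total drop is $\geq (c_1/2)(1+o(1))|t|$, which for $|t|$ large comfortably dominates $|t|/(4\log|t|)$.

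The main technical point is the choice of the window $P$. It needs to reach high enough in the primes that $h_p(r)$ is of constant order (ruling out the simpler choice $[|t|, 2|t|]$, which fails when $|r|\asymp|t|$ because $A_p$ could then be as large as a power of $p$), yet the phase $\alpha_p+\beta_p$ must stay boundedly away from integer multiples of $2\pi$ throughout $P$. The window $[|t|\log|t|, 2|t|\log|t|]$ is calibrated precisely to balance these two requirements under the sole hypothesis $|t|\geq|r|$, and as a bonus it contains $\sim|t|$ primes, which is what ultimately drives the exponential decay.
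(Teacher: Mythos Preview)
Your argument is essentially correct and yields the lemma, but by a genuinely different route than the paper's. One slip: you claim that for $p\in P=[|t|\log|t|,2|t|\log|t|]$ the exponent $|r|\log p/p$ is $O(1/\log|t|)$. In the worst case $|r|=|t|$ this exponent is actually $\log p/\log|t|=1+O(\log\log|t|/\log|t|)$, not $o(1)$. Fortunately your stated range $A_p,B_p\in[(2e)^{-1}+o(1),e/2+o(1)]$ is exactly what the correct bound $|r|\log p/(p\pm1)\le 1+o(1)$ gives, so the rest of the argument is unaffected.

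The paper proceeds quite differently. It discards the primes $p\le y:=|t|(\log|t|)^2$ and for $p>y$ uses the approximation $h_p(s)=\cosh(s\log p/p)(1+O(|s|\log p/p^2))$ together with the Taylor expansion $\log\cosh z=z^2/2+O(|z|^4)$, obtaining
\[
\log\frac{|h_p(s)|}{h_p(r)}=-\frac{t^2(\log p)^2}{2p^2}+\text{(error)},
\]
and then sums over $p>y$ via the prime number theorem to get $\exp(-|t|/(4\log|t|))$. Your approach instead exploits the exact identity
\[
h_p(r)^2-|h_p(r+it)|^2=2A_pB_p\bigl(1-\cos(\alpha_p+\beta_p)\bigr)+2A_pC_p(1-\cos\alpha_p)+2B_pC_p(1-\cos\beta_p)\ge 0
\]
and chooses a lower dyadic window $P$ where the phase $\alpha_p+\beta_p\approx 2|t|\log p/p$ lies in $[1+o(1),2+o(1)]\subset(0,\pi)$, so $1-\cos(\alpha_p+\beta_p)$ is bounded below by an absolute constant. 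This is more elementary (no Taylor expansion of $\log\cosh$), and because $|P|\sim|t|$ it actually delivers the stronger decay $|\lapr(s)|\le e^{-c|t|}\lapr(r)$ for some absolute $c>0$, which of course implies the stated bound. The paper's choice $p>|t|(\log|t|)^2$ is calibrated so that $|s|\log p/p\le 1$ permits the quadratic expansion; your choice $p\asymp|t|\log|t|$ is calibrated so that the phase sits safely inside one oscillation of the cosine. Either works; yours gives more.
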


\begin{proof}
For simplicity we suppose that $r$ and $t$ are both positive. Since $|h_p(s)|\leq h_p(r)$ we obtain
that for any $y\geq 2$ 
\begin{equation}\label{decay1}
\frac{\left|\lapr(s)\right|}{\lapr(r)}\leq \prod_{p>y}\frac{|h_p(s)|}{h_p(r)}.
\end{equation}
Moreover, the same argument leading to \eqref{locallargep} shows that for primes $p>|s|^{2/3}$ we have
$$
h_p(s)= \cosh\left(\frac{s\log p}{p}\right) \left(1+O\left(\frac{|s|\log p}{p^2}\right)\right).
$$
Let $y=t(\log t)^2$. Since $\log\cosh(z)=z^2/2+ O(|z|^4)$ for $|z|\leq 1$, we deduce that for all primes $p>y$ 
$$ 
\frac{h_p(s)}{h_p(r)}
= \exp\left(\frac{(s^2-r^2)(\log p)^2}{2p^2}+ O\left(\frac{t\log p}{p^{2}}+\frac{t^4(\log p)^4}{p^4}\right)\right).
$$ 
Since $\text{Re}(s^2-r^2)=-t^2$, it follows from the prime number theorem and equation \eqref{decay1} that
\begin{align*} 
\frac{\left|\lapr(s)\right|}{\lapr(r)}
&\leq \exp\left(-\frac{t^2}{2}\sum_{p>y}\frac{(\log p)^2}{p^{2}}+ O\left(t\sum_{p>y}\frac{\log p}{p^2}+t^4\sum_{p>y}\frac{(\log p)^4}{p^4}\right)\right)\leq \exp\left(- \frac{t}{4\log t} \right).
\end{align*}

\end{proof} 

\begin{proof}[Proof of Theorem \ref{SaddlePoint}]
We only prove the estimate for $\pr(\gr>\tau)$ since the corresponding asymptotic for $\pr(\gr<-\tau)$ requires only minor modifications.

Let $0<\lambda<1/(2\kappa)$ be a real number to be chosen later. Note that $\gr>\tau$ if and only if $\exp(\gr-\tau)>1$. Therefore, using Lemma \ref{SmoothPerron} with $N=1$ we obtain
\begin{equation}\label{approximation1}
\begin{aligned}
0&\leq \frac{1}{2\pi i}\int_{\kappa-i\infty}^{\kappa+i\infty}\lapr(s)e^{-\tau s}\frac{e^{\lambda s}-1}{\lambda s}\frac{ds}{s}-\pr(\gr>\tau)\\
&\leq \frac{1}{2\pi i}\int_{\kappa-i\infty}^{\kappa+i\infty} \lapr(s)e^{-\tau s} \frac{\left(e^{\lambda s}-1\right)}{\lambda s} \frac{\left(1-e^{-\lambda s}\right)}{s}ds.
\end{aligned}
\end{equation}
Since $\lambda\kappa<1/2$ we have
$|e^{\lambda s}-1|\leq 3 \text{ and } |e^{-\lambda s}-1|\leq 2$. 
Hence, by Lemma \ref{DecayLaplace} we obtain
\begin{equation}\label{error12}
 \int_{\kappa-i\infty}^{\kappa-i\kappa}+ \int_{\kappa+i\kappa}^{\kappa+i\infty}\lapr(s)e^{-\tau s}\frac{e^{\lambda s}-1}{\lambda s}\frac{ds}{s} \ll \frac{e^{-\kappa/(4\log \kappa)}}{\lambda \kappa} \lapr(\kappa)e^{-\tau\kappa},
\end{equation}
and similarly
\begin{equation}\label{error2}
\int_{\kappa-i\infty}^{\kappa-i\kappa}+ \int_{\kappa+i\kappa}^{\kappa+i\infty}\lapr(s)e^{-\tau s} \frac{\left(e^{\lambda s}-1\right)}{\lambda s} \frac{\left(1-e^{-\lambda s}\right)}{s}ds \ll \frac{e^{-\kappa/(4\log \kappa)}}{\lambda \kappa} \lapr(\kappa)e^{-\tau\kappa}.
\end{equation}
Let $s=\kappa+it$. If $|t|\leq \kappa$ then $\left|(1-e^{-\lambda s})(e^{\lambda s}-1)\right|\ll \lambda^2|s|^2$. Since $|\lapr(s)|\leq |\lapr(\kappa)$ we derive
$$
\int_{\kappa-i\kappa}^{\kappa+i\kappa} \lapr(s)e^{-\tau s} \frac{\left(e^{\lambda s}-1\right)}{\lambda s} \frac{\left(1-e^{-\lambda s}\right)}{s}ds \ll \lambda\kappa\lapr(\kappa)e^{-\tau \kappa}.
$$ 
Therefore, combining this estimate with equations \eqref{approximation1}, \eqref{error12} and \eqref{error2} we deduce that
\begin{equation}\label{approximation2}
\pr(\gr>\tau) - \frac{1}{2\pi i}\int_{\kappa-i\kappa}^{\kappa+i\kappa}\lapr(s)e^{-\tau s}\frac{e^{\lambda s}-1}{\lambda s^2} ds \ll \left(\lambda\kappa+\frac{e^{-\kappa/(4\log \kappa)}}{\lambda \kappa}\right)\lapr(\kappa)e^{-\tau\kappa}.
\end{equation}
On the other hand, it follows from equation \eqref{LaplaceRand5} that for $|t|\leq \kappa$ we have 
$$ 
M(\kappa+it)= M(\kappa)+it M'(\kappa)-\frac{t^2}{2}M''(\kappa)+ O\left(|t|^3\frac{1}{\kappa^2}\right).
$$
Also, note that
$$ \frac{e^{\lambda s}-1}{\lambda s^2}=\frac{1}{s}\big(1+O(\lambda \kappa)\big)= \frac{1}{\kappa}\left(1-i\frac{t}{\kappa}+ O\left(\lambda \kappa+\frac{t^2}{\kappa^2}\right)\right).$$
Hence, using that $\lapr(s)=\exp(M(s))$ and $M'(\kappa)=\tau$ we obtain 
\begin{align*}
&\lapr(s)e^{-\tau s}\frac{e^{\lambda s}-1}{\lambda s^2}\\
= &\frac{1}{\kappa}\lapr(\kappa)e^{-\tau\kappa}\exp\left(-\frac{t^2}{2}M''(\kappa)\right) 
\left(1-i\frac{t}{\kappa}+O\left(\lambda\kappa+ \frac{t^2}{\kappa^2}+ |t|^3\frac{1}{\kappa^2}\right)\right).\\
\end{align*}
Thus, we get
\begin{equation}\label{TaylorSaddle}
\begin{aligned}
&\frac{1}{2\pi i}\int_{\kappa-i\kappa}^{\kappa+i\kappa}\lapr(s)e^{-\tau s}\frac{e^{\lambda s}-1}{\lambda s^2} ds\\
=& \frac{1}{\kappa}\lapr(\kappa)e^{-\tau\kappa} \frac{1}{2\pi} \int_{-\kappa}^{\kappa}\exp\left(-\frac{t^2}{2}M''(\kappa)\right)
\left(1+ O\left(\lambda\kappa+ \frac{t^2}{\kappa^2}+ |t|^3\frac{1}{\kappa^2}\right)\right)dt
\end{aligned}
\end{equation}
since the integral involving $it/{\kappa}$ vanishes. Further, since $M''(\kappa)\asymp 1/\kappa$ by \eqref{LaplaceRand5} we derive
$$ 
\frac{1}{2\pi} \int_{-\kappa}^{\kappa}\exp\left(-\frac{t^2}{2}M''(\kappa)\right)dt= \frac{1}{\sqrt{2\pi M''(\kappa)}}\left(1+O\left(e^{-\sqrt{\kappa}}\right)\right),
$$
and 
$$ \int_{-\kappa}^{\kappa}|t|^n\exp\left(-\frac{t^2}{2}M''(\kappa)\right)dt\ll \frac{1}{M''(\kappa)^{(n+1)/2}}\ll \frac{\kappa^{n/2}}{\sqrt{M''(\kappa)}}.$$
Inserting these estimates in \eqref{TaylorSaddle} we deduce that
\begin{equation}\label{main}
\begin{aligned}
&\frac{1}{2\pi i}\int_{\kappa-i\kappa}^{\kappa+i\kappa}\lapr(s)e^{-\tau s}\frac{e^{\lambda s}-1}{\lambda s^2} ds\\
=& \frac{\lapr(\kappa)e^{-\tau\kappa}}{\kappa\sqrt{2\pi M''(\kappa)}}
\left(1+ O\left(\lambda\kappa+ \frac{1}{\sqrt{\kappa}}\right)\right).
\end{aligned}
\end{equation}
Finally,  combining the estimates \eqref{approximation2} and \eqref{main} and choosing $\lambda= \kappa^{-2}$ completes the proof.
\end{proof}

\begin{proof}[Proof of Theorem \ref{ExponentialDecay}]
Again we only prove the estimate for $\pr(\gr>\tau)$, as the corresponding estimate for $\pr(\gr<-\tau)$ can be obtained similarly. By Theorem \ref{SaddlePoint} and equation \eqref{LaplaceRand5}, we have 
$$ \pr(\gr>\tau)= \frac{\lapr(\kappa)e^{-\tau \kappa}}{\kappa \sqrt{2\pi M''(\kappa)}}\left(1+O\left(\frac{1}{ \sqrt{\kappa}} \right)\right)= \exp\Big(M(\kappa)-\tau\kappa+O(\log \kappa)\Big),$$ 
where $\kappa$ is the unique solution to $M'(\kappa)=\tau$. Furthermore, by  \eqref{LaplaceRand3} we have
\begin{equation}\label{EstSaddle1}
\tau= \log\kappa+\log\log\kappa+A_1+O\left(\frac{\log\log \kappa}{\log\kappa}\right),
\end{equation}
and hence we deduce from \eqref{LaplaceRand1} that 
\begin{equation}\label{EstSaddle2}
\pr(\gr>\tau)= \exp\left(-\kappa +O\left(\frac{\kappa \log\log \kappa}{\log \kappa}\right)\right).
\end{equation}
Now, \eqref{EstSaddle1} implies that $\log\kappa=\tau+O(\log \tau)$ and 
$$ \kappa\log \kappa= e^{\tau-A_1}\left(1+O\left(\frac{\log \tau}{\tau}\right)\right).$$
Thus, we obtain 
\begin{equation}\label{OrderSaddle}
 \kappa= \frac{e^{\tau-A_1}}{\tau}\left(1+O\left(\frac{\log \tau}{\tau}\right)\right).
\end{equation}
The result follows upon inserting the estimate \eqref{OrderSaddle} in \eqref{EstSaddle2}. 

\end{proof}


\section{The distribution of extreme values of $\g$: proof of Theorem \ref{MainTheorem} }
By Theorem \ref{AsympLaplace} there exists a constant $B>0$ and 
a set of fundamental discriminants $\E\subset \F$ with $|\E|=O\left(\sqrt{x}\right)$, such that for all complex numbers $s$ with $|s|\leq \log x/(B(\log\log x)^2)$ we have 
\begin{equation}\label{Theorem1.5}
\frac{1}{|\F|}\sum_{D\in \F\setminus \mathcal{E}(x)}\exp\left(s \cdot \g\right) = \lapr(s)+ 
O\left(\exp\left(-\frac{\log x}{50\log\log x}\right)\right).
\end{equation}
To shorten our notation we let
$$\pr_x(\g \in S):=\frac{1}{|\F|}\big|\{D\in \F: \g \in S \}\big|,$$
and 
$$\lap(s)= \frac{1}{|\F|}\sum_{D\in \F\setminus \mathcal{E}(x)}\exp\left(s \cdot \g\right).$$

\begin{proof}[Proof of Theorem \ref{MainTheorem}]
As before,  $\kappa$ denotes the unique solution to $M'(r)=\tau$. Let $N$ be a positive integer and  $0<\lambda<\min\{1/(2\kappa), 1/N\}$ be a real number to be chosen later. 

Let $Y=\log x/(2B(\log\log x)^2)$. If $x$ is large enough then equation \eqref{OrderSaddle} insures that $\kappa\leq Y$. Also, note that \eqref{Theorem1.5} holds for all complex numbers $s=\kappa+it$ with $|t|\leq Y$. 
We consider the integrals
$$
I(\tau)= \frac{1}{2\pi i}\int_{\kappa-i\infty}^{\kappa+i\infty}\lapr(s)e^{-\tau s}\left(\frac{e^{\lambda s}-1}{\lambda s}\right)^N\frac{ds}{s}
$$
and 
$$ 
J_x(\tau)= \frac{1}{2\pi i}\int_{\kappa-i\infty}^{\kappa+i\infty}\lap(s)e^{-\tau s}\left(\frac{e^{\lambda s}-1}{\lambda s}\right)^N\frac{ds}{s}.
$$
 Then, using equation \eqref{indicator} we obtain
\begin{equation}\label{Mellin1}
 \pr(\gr>\tau)\leq I(\tau)\leq \pr(\gr>\tau-\lambda N),
\end{equation}
and 
\begin{equation}\label{Mellin2}
 \pr_x\Big(\g>\tau\Big)+O\left(x^{-1/2}\right)\leq J_x(\tau)\leq \pr_x\Big(\g>\tau-\lambda N\Big)+O\left(x^{-1/2}\right),
\end{equation}
since $|\E|/|\F|\ll x^{-1/2}.$

Further, using that $|e^{\lambda s}-1|\leq 3$ and $|\lapr(s)|\leq \lapr(\kappa)$ we obtain 
\begin{equation}\label{tail1}
\int_{\kappa-i\infty}^{\kappa-iY}+ \int_{\kappa+iY}^{\kappa+i\infty}\lapr(s)e^{-\tau s}\left(\frac{e^{\lambda s}-1}{\lambda s}\right)^N\frac{ds}{s}\ll \frac{1}{N}\left(\frac{3}{\lambda Y}\right)^N\lapr(\kappa)
e^{-\tau \kappa}.
\end{equation}
Similarly, using that $|\lap(s)|\leq \lap(\kappa)$ along with Theorem \ref{AsympLaplace}  we get
\begin{equation}\label{tail2}
\int_{\kappa-i\infty}^{\kappa-iY}+ \int_{\kappa+iY}^{\kappa+i\infty}\lap(s) e^{-\tau s}\left(\frac{e^{\lambda s}-1}{\lambda s}\right)^N\frac{ds}{s}
\ll \frac{1}{N}\left(\frac{3}{\lambda Y}\right)^N\lapr(\kappa)e^{-\tau \kappa}.
\end{equation}
Moreover, note that $|(e^{\lambda s}-1)/\lambda s|\leq 3$, which is easily seen by looking at the cases $|\lambda s|\leq 1$ and $|\lambda s|>1.$ Therefore,  combining  equations \eqref{Theorem1.5}, \eqref{tail1} and \eqref{tail2}  we obtain
\begin{equation}\label{difference1}
J_x(\tau)- I(\tau)\ll \frac1N\left(\frac{3}{\lambda Y}\right)^N\lapr(\kappa)e^{-\tau \kappa}+  \frac{Y}{\kappa}3^N e^{-\tau\kappa} \exp\left(-\frac{\log x}{50\log\log x}\right).
\end{equation}
Furthermore, it follows from Theorem \ref{SaddlePoint} and equation \eqref{LaplaceRand5}   that
\begin{equation}\label{order}
 \pr(\gr>\tau)\asymp\frac{\lapr(\kappa)e^{-\tau \kappa}}{k\sqrt{M''(\kappa)}}
\asymp\frac{\lapr(\kappa)e^{-\tau \kappa}}{\sqrt{\kappa}}.
\end{equation}
Thus, choosing $N=[\log\log x]$ and $\lambda= e^{10}/Y$ we deduce that 
\begin{equation}\label{difference2}
J_x(\tau)- I(\tau)\ll \frac{1}{(\log x)^{5}} \pr(\gr>\tau).
\end{equation}
On the other hand, it follows from Theorem \ref{ExponentialDecay} that 
\begin{equation}\label{shift} 
\begin{aligned}
\pr(\gr>\tau\pm \lambda N)&= \pr(\gr>\tau)\exp\left(O\left(\lambda N \frac{e^{\tau}}{\tau}\right)\right)\\
&= \pr(\gr>\tau)\left(1+O\left(\frac{e^{\tau}(\log\log x)^3}{\tau\log x}\right)\right).\\
\end{aligned}
\end{equation}
Combining this last estimate with \eqref{Mellin1}, \eqref{Mellin2}, and \eqref{difference2} we obtain
\begin{align*}
\pr_x(\g>\tau)&
\leq J_x(\tau)+O\big(x^{-1/2}\big) \\
&\leq I(\tau)+ O\left(\frac{\pr(\gr>\tau)}{(\log x)^{5}}+x^{-1/2}\right)\\
&\leq \pr(\gr >\tau)\left(1+O\left(\frac{e^{\tau}(\log\log x)^3}{\tau\log x}\right)\right)+ O\big(x^{-1/2}\big),
\end{align*}
and 
\begin{align*}
\pr_x(\g>\tau)&
\geq J_x(\tau+\lambda N)+O\big(x^{-1/2}\big) \\
&\geq I(\tau+\lambda N)+ O\left(\frac{\pr(\gr>\tau)}{(\log x)^{5}}+ x^{-1/2}\right)\\
&\geq \pr(\gr>\tau)\left(1+O\left(\frac{e^{\tau}(\log\log x)^3}{\tau\log x}\right)\right)+ O\big(x^{-1/2}\big).\\
\end{align*}
The result follows from these estimates together with the fact that  $\pr(\gr>\tau)\gg x^{-1/4}$ in our range of $\tau$, by Theorem \ref{ExponentialDecay}.

\end{proof}


\end{document}